 \theoremstyle{plain}
    \newtheorem{thm}{Theorem}[section]
    \newtheorem{prop}[thm]{Proposition}
    \newtheorem{corollary}[thm]{Corollary}
    \newtheorem{subsec}[thm]{}
\theoremstyle{definition}
    \newtheorem{defn}[thm]{Definition}
    \newtheorem{exam}[thm]{Examples}
      \newtheorem{lemma}[thm]{Lemma}
\theoremstyle{remark}
     \newtheorem{remark}[thm]{Remark}
\newenvironment{mysubsection}[2][]
{\begin{subsec}\begin{upshape}\begin{bfseries}{#2.}
\end{bfseries}{#1}}
{\end{upshape}\end{subsec}}
\newenvironment{myeq}[1][]
{\stepcounter{thm}\begin{equation}\tag{\thethm}{#1}}
{\end{equation}}
\newcommand{\Ab}{\mathcal{A}b}
\newcommand{\Grp}{\mathcal{G}rp}
\newcommand{\R}{{\mathbb R}}
\newcommand{\ints}{{\mathbb{Z}}}
\newcommand{\ha}{\hat{a}}
\newcommand{\upi}{\underline{\pi}}
\newcommand{\uphi}{\underline{\phi}}
\newcommand{\OG}{\mathcal{O}_G}
\newcommand{\OGop}{\OG^{\op}}
\newcommand{\Crs}{{\EuScript Crs}}
\newcommand{\CRS}{{\EuScript CRS}}
\newcommand{\Top}{{\EuScript Top}}
\newcommand{\Set}{{\EuScript Set}}
\newcommand{\FTop}{{\EuScript {FT}op}}
\newcommand{\uc}[1]{\chi_G(\MM,\,{#1})}
\newcommand{\realz}[1]{|{#1}|}
\newcommand{\uch}[1]{\chi_{\uphi(G/H)}(M(G/H),\,{#1})}
\newcommand\Ss{{\mathcal S}}
\newcommand\A{{\mathcal A}}
\newcommand\BB{{\mathfrak B}}
\newcommand\CC{{\mathcal C}}
\newcommand\DD{{\mathcal D}}
\newcommand\EE{{\mathcal E}}
\newcommand\FF{{\mathcal F}}
\newcommand\HH{{\mathcal H}}
\newcommand\LL{{\mathcal L}}
\newcommand\MM{{\mathcal M}}
\newcommand\PP{{\mathcal P}}
\newcommand\TT{{\mathcal T}}
\newcommand\UU{{\mathcal U}}
\newcommand\PMF{{\PP\kern-2pt\MM\FF}}
\newcommand\PML{{\PP\kern-2pt\MM\LL}}
\newcommand{\fsubd}{\mathrel{{\scriptstyle\searrow}\kern-1ex^d\kern0.5ex}}
\newcommand{\bsubd}{\mathrel{{\scriptstyle\swarrow}\kern-1.6ex^d\kern0.8ex}}
\newcommand{\fsubeq}{\mathrel{\raise-.7ex\hbox{$\overset{\searrow}{=}$}}}
\newcommand{\bsubeq}{\mathrel{\raise-.7ex\hbox{$\overset{\swarrow}{=}$}}}
\newcommand{\tsh}[1]{\left\{\kern-.9ex\left\{#1\right\}\kern-.9ex\right\}}
\newcommand{\Fibre}{\operatorname{Fibre}}
\newcommand{\id}{\operatorname{id}}
\newcommand{\pr}{\operatorname{pr}}
\newcommand{\CW}{\operatorname{CW}}
\newcommand{\Coh}{\operatorname{Coh}}
\newcommand{\colim}{\operatorname{colim}}
\newcommand{\diag}{\operatorname{diag}}
\newcommand{\Sing}{\operatorname{Sing}}
\newcommand{\Maps}{\operatorname{Map}}
\newcommand{\Hom}{\operatorname{Hom}}
\newcommand{\ob}{\operatorname{Ob}}
\newcommand{\op}{\operatorname{op}}
\begin{document}

\title{Representing Bredon cohomology with local coefficients}

 \author{Samik Basu}
\email{samik.basu2@gmail.com; samik@rkmvu.ac.in}
\address{Department of Mathematics,
 Vivekananda University,
 Belur, Howrah 711202,
West Bengal, India.}

 \author{Debasis Sen}
 \thanks{The second author was supported by INSPIRE research grant.}
\email{debasis@iitk.ac.in}
\address{Department of Mathematics and Statistics,
 IIT Kanpur,
 India.}
\date{\today}
\subjclass[2010]{Primary: 55N25, 55N91, 55P42;\ Secondary: 55P91, 55Q91, 55T99}
\keywords{Crossed complexes, Bredon cohomology, local coefficient system, parametrized spectra}

\begin{abstract}
For a discrete group $G$, we represent the Bredon cohomology with local coefficients as the homotopy classes of maps in the category of equivariant crossed complexes. Subsequently, we construct a naive parametrized $G$-spectrum, such that the cohomology theory defined by it reduces to the Bredon cohomology with local coefficients when restricted to suspension spectra.
\end{abstract}

\maketitle

\setcounter{section}{0}

\section{Introduction}
Let $G$ be a discrete group. In \cite{mm,ms} the authors introduced the notion of Bredon cohomology with local coefficients and constructed a representing $G$-space. In this paper we continue the study of this representability result.

In \cite{git}, Gitler proved that the cohomology groups of a space with local coefficients are representable in the homotopy category. The classifying space for the $n$-th cohomology group $H^n(X;\A)$ of $X$ with local coefficients $\A$ is the generalized Eilenberg-Mac~Lane complex $L_{\pi}(A,n)$, where $\A$ is given by an action of $\pi=\pi_1(X)$ on $A$. The space $L_{\pi}(A,q)$ appears as the total space of a fibration $L_{\pi}(A,q) \rightarrow  K(\pi,1)$. The fibration may be interpreted as an object of the slice category $\Top/K(\pi,1)$, where $\Top$ denotes the category of topological spaces and continuous maps. There is a canonical map $X \rightarrow K(\pi,1)$, inducing the identity on fundamental group, so that $X$ can be viewed as in $\Top/K(\pi,1)$. The classification theorem states that $H^n(X;\A)$ is isomorphic to the homotopy classes of maps $[X,L_{\pi}(A,n)]_{\Top/K(\pi,1)}$ in the slice category $\Top/K(\pi,1)$.

This result was extended to Bredon cohomology with local coefficients in \cite{m,ms}. In this case, the representing space can be written using the construction of Elmendorf (see \cite{elm}) which establishes an equivalence between the homotopy category of $G$-spaces and contravariant functors from the orbit category $\OG$ to spaces. The main idea there was to use Gitler's result to construct the fixed points for each subgroup $H$ of $G$ and then use Elmendorf's construction to form the representing $G$-space.

In this paper we write down this representability result in two ways, using crossed complexes and parametrized spectra respectively. Crossed complexes encode the algebraic properties of the sequence $\{\pi_n(X_n,X_{n-1},v)_{v\in X_0}\}_{n\geq 1}$ associated to a skeletal filtration $X_0\subset X_1\subset \cdots$ of a CW complex $X$. The idea of reduced crossed complex, i.e. when $X_0=*$, have  been
studied by Blakers \cite{bla}, J.H.C. Whitehead \cite{white1} and Huebschmann \cite{hue1,hue2} under the names `group systems', `homotopy systems' and `crossed  resolutions', respectively. It was defined in full generality by Brown and Higgins in \cite{rbrown5, rbrown5a} and studied further in \cite{rbrown3, rbrown2, rbrown, brownb}. It is has been shown in \cite[Proposition $4.9$]{rbrown3} that cohomology with local coefficients can be represented as homotopy classes of maps in crossed complexes. We extend this representability result for Bredon cohomology with local coefficients (cf. \cite{m,ms}) using equivariant crossed complexes, which is identified as $\OG\mbox{-}\Crs$, the diagram category of contravariant functors from $\OG$ to crossed complexes (see \cite{rbrown2}). In this regard, we construct a series of equivariant crossed complexes $\chi_G(\MM ,n) \rightarrow \chi_G(\upi,1)$ arising from a $\upi$-module $\MM=(M,\uphi)$. Here $\upi$ is an $\OG$-group (cf. Definition \ref{oggrp}). Combining \cite[Theorem 3.3]{m} and \cite[Theorem 4.1]{brownb}, we deduce the following.
\newtheorem*{thma}{Theorem A}
\begin{thma}
Given a $\upi$-module $\MM=(M,\uphi)$ and $G$-CW complex $X$ together with a map $\theta\colon X\to K_G(\upi,1),$ the $n$-th Bredon cohomology with local coefficients $H^n_G(X;\theta^*\MM)$ is isomorphic to $\pi_0 \Coh\underline{\Crs}(\Pi_G(X),\uc{n})_{\chi_G(\upi,1)}.$
\end{thma}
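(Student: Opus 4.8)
The plan is to splice together the two representability statements advertised just before the theorem: the equivariant topological representation of Bredon cohomology with local coefficients, \cite[Theorem 3.3]{m}, and the crossed-complex homotopy classification of Brown--Higgins, \cite[Theorem 4.1]{brownb}. First I would recall what \cite[Theorem 3.3]{m} supplies: a representing $G$-space $L_G(\MM,n)$ equipped with a $G$-fibration $L_G(\MM,n)\to K_G(\upi,1)$ and a natural isomorphism $H^n_G(X;\theta^*\MM)\cong [X,\,L_G(\MM,n)]_{K_G(\upi,1)}$, where the homotopy classes are taken in the slice category of $G$-spaces over $K_G(\upi,1)$, the slicing being via the structure map $\theta$. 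This first move re-expresses the left-hand side of the statement as $\pi_0$ of an equivariant mapping space fibred over the base $K_G(\upi,1)$.

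Next I would transport this across to equivariant crossed complexes using the equivariant fundamental crossed complex functor $\Pi_G$ and its right adjoint, the classifying-space functor $B_G$. Via Elmendorf's construction (as in the cited background), $\Pi_G(X)$ is assembled over the orbit category $\OG$ from the ordinary fundamental crossed complexes of the fixed-point spaces, so the Brown--Higgins classification \cite[Theorem 4.1]{brownb} can be applied object-wise over $\OG$ and then reassembled: for an equivariant crossed complex $C$ one gets a natural bijection between equivariant homotopy classes $[X,\,B_G C]$ and homotopy classes of equivariant crossed-complex maps $\Pi_G(X)\to C$. I would apply this with $C=\uc{n}$, using the identifications $B_G\,\uc{n}\simeq L_G(\MM,n)$ and $B_G\,\chi_G(\upi,1)\simeq K_G(\upi,1)$ so that the structural projection $\uc{n}\to\chi_G(\upi,1)$ is carried to the representing fibration $L_G(\MM,n)\to K_G(\upi,1)$. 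The slice homotopy classes over $K_G(\upi,1)$ thereby become slice homotopy classes over $\chi_G(\upi,1)$, and these are exactly what $\pi_0\Coh\underline{\Crs}(\Pi_G(X),\uc{n})_{\chi_G(\upi,1)}$ computes, since the internal hom crossed complex has $\pi_0$ equal to the set of homotopy classes of morphisms. Chaining the two isomorphisms yields the claim.

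The main obstacle I expect is the compatibility of both equivalences with the slice (parametrized) structure over the base, rather than either representability result in isolation. Concretely, one must verify that $\Pi_G$ sends the chosen structure map $\theta\colon X\to K_G(\upi,1)$ to the map $\Pi_G(X)\to\chi_G(\upi,1)$ along which one slices, that the Brown--Higgins adjunction is natural and fibred enough to descend from the absolute categories to the respective slice categories over $\chi_G(\upi,1)$ and $K_G(\upi,1)$, and that the cofibrancy and $G$-CW hypotheses required by \cite[Theorem 4.1]{brownb} genuinely hold for $\Pi_G(X)$. The identification $B_G\,\uc{n}\simeq L_G(\MM,n)$ over the base is where the construction of $\uc{n}\to\chi_G(\upi,1)$ earns its keep, as it was built precisely to mirror the representing fibration. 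Once this fibred compatibility over $\chi_G(\upi,1)$ is in place, the two results combine by naturality and the asserted isomorphism follows.
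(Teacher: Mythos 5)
Your proposal follows essentially the same route as the paper: Møller's representability $H^n_G(X;\theta^*\MM)\cong[X,L_{\upi}(M,n)]_{K_G(\upi,1)}$ on one side, the equivariant Brown--Higgins classification $\Maps_G(X,\BB^G T)\simeq \Coh\underline{\Crs}(\Pi_G(X),T)$ on the other, and the identification of $\BB^G$ applied to $p\colon\uc{n}\to\chi_G(\upi,1)$ with the classifying $G$-fibration $L_{\upi}(M,n)\to K_G(\upi,1)$ (the paper's Proposition \ref{padjoint}) to make the two slice structures match. The compatibility issues you flag are exactly the ones the paper addresses, via the pullback definition of the slice mapping space and the verification that $\BB^G(p)$ is a $G$-fibration.

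One concrete gap: Møller's theorem as used in the paper (Theorem \ref{thmol}) gives the isomorphism $H^n_G(X;\theta^*\MM)\cong[X,L_{\upi}(M,n)]_{K_G(\upi,1)}$ only for $n\geq 1$, whereas Theorem A is asserted for all $n$, including $n=0$. Your argument silently assumes the representability holds in degree zero; the paper has to earn this separately, devoting a subsection to a Mayer--Vietoris/suspension isomorphism $H^n_G(S_{K_G(\upi,1)}(X);S(p)^*\MM)\cong H^{n-1}_G(X;p^*\MM)\oplus H^n_G(K_G(\upi,1);\MM)$ together with a fibrewise loop adjunction and the equivalence $\Omega_{K_G(\upi,1)}(L_{\upi}(M,1))\simeq_G L_{\upi}(M,0)$, yielding Corollary \ref{thmol0}. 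Without some such deduction the $n=0$ case of your chain of isomorphisms has no starting point. For $n\geq 1$ your outline is complete modulo the fibred-naturality checks you yourself identify.
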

\noindent  Here $\Coh\underline{\Crs}(\Pi_G(X),\uc{n})_{\chi_G(\upi,1)}$ is the simplicial set of homotopy coherent transformations in the over category $\OG\mbox{-}\Crs/\chi_G(\upi,1)$ and $K_G(\upi,1)$ is the equivariant Eilenberg-Mac~Lane space. See Theorem \ref{thmfirst} below.

The idea of representing cohomology theories is best achieved by spectra in stable homotopy theory: the various topological spaces representing the different cohomology groups fit together to form a spectrum, and the cohomology theory is represented by an object in the stable homotopy category which is the homotopy category of spectra in an appropriate model category structure. In \cite{par}, May and Sigurdsson  have defined parametrized spectra over a space $B$ and showed that these represent cohomology theories in the category $\Top/B$.  Fix a group $\pi$. In the case of cohomology with local coefficients, the domain category consists of CW complexes $X$ over $K(\pi,1)$. We show that cohomology with local coefficients can be represented by a parametrized spectrum, constructed using classifying spaces of the representing crossed complexes (see \cite{rbrown3}). 

Lastly, for a discrete group $G$, we combine the results of the non equivariant case using the `coalescence functor' $\Psi$ of Elmendorf \cite{elm}. We construct an equivariant parametrized spectrum $J_G\MM$ associated to an equivariant local coefficient system $\MM$, again by using the geometric realization of the nerve of the equivariant crossed complexes $\uc{n}$. $J_G\MM$ is a naive $G$-spectrum (indexed over a trivial $G$-universe). This is expected for arbitrary coefficient systems, since one lacks the required transfer maps to index the spectrum over a complete $G$-universe.
\newtheorem*{thmb}{Theorem B}
\begin{thmb}
For $X$ and $\MM$ as in Theorem A, the $n$-th Bredon cohomology with local coefficients $H^n_G(X;\theta^*\MM)$ is isomorphic to the set of maps of parametrized spectra over the equivariant Eilenberg-Mac~Lane $G$-space $K_G(\upi,1)$ written as
 $[\Sigma^\infty_{K_G(\upi,1)} X_{+ K_G(\upi,1)},\Sigma_{K_G(\upi,1)}^n J_G \MM]_{K_G(\upi,1)}$.
\end{thmb}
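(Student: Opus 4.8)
The plan is to bridge Theorem A and the parametrized spectrum $J_G\MM$ by passing through the unstable mapping spaces over the base $K_G(\upi,1)$. Write $B = K_G(\upi,1)$. Since we only assert the statement on suspension spectra, it suffices to analyze maps out of $\Sigma^\infty_{B} X_{+B}$, so the first reduction is to express the set $[\Sigma^\infty_{B} X_{+B},\Sigma_{B}^n J_G \MM]_{B}$ as homotopy classes of sections over $B$. I would then identify those sections with the homotopy-coherent crossed-complex transformations appearing in Theorem A, and invoke Theorem A to land on $H^n_G(X;\theta^*\MM)$.

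First I would use the parametrized suspension--loop adjunction of May and Sigurdsson (\cite{par}): $\Sigma^\infty_B$ is left adjoint to the fiberwise zeroth-space functor $\Omega^\infty_B$ on parametrized spectra over $B$. Because $J_G\MM$ is assembled from the geometric realizations $|N\uc{n}|$ of the nerves of the crossed complexes $\uc{n}$ together with their natural structure maps, the key preliminary is to check that $J_G\MM$ is a fiberwise $\Omega$-spectrum over $B$; granting this, the fiberwise zeroth space $\Omega^\infty_B \Sigma_B^n J_G\MM$ is weakly equivalent over $B$ to $|N\uc{n}|$, and the adjunction together with the disjoint-section description of $X_{+B}$ yields
\[
[\Sigma^\infty_{B} X_{+B},\Sigma_{B}^n J_G \MM]_{B}\ \cong\ [X, |N\uc{n}|]_{\Top/B},
\]
the homotopy classes of maps over $B$. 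This step is carried out $G$-equivariantly via Elmendorf's construction (\cite{elm}) fixed-point-wise over $\OG$, thereby reducing matters to the non-equivariant representability of cohomology with local coefficients by a parametrized spectrum and then reassembling over the orbit category.

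Next I would identify the right-hand side with the crossed-complex description. By the Brown--Higgins classifying-space theory (\cite{rbrown3}, in the form used for Theorem A via \cite{brownb}), the classifying space $|N\uc{n}|$ over $\chi_G(\upi,1)$ corepresents homotopy classes of crossed-complex morphisms out of the fundamental crossed complex $\Pi_G(X)$ relative to $\chi_G(\upi,1)$. Decoding this equivariantly through Elmendorf's equivalence gives
\[
[X, |N\uc{n}|]_{\Top/B}\ \cong\ \pi_0 \Coh\underline{\Crs}(\Pi_G(X),\uc{n})_{\chi_G(\upi,1)},
\]
so that combining the two displayed isomorphisms with Theorem A produces the asserted identification with $H^n_G(X;\theta^*\MM)$.

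I expect the \emph{main obstacle} to be the parametrized $\Omega$-spectrum verification and the attendant point-set bookkeeping: one must arrange $J_G\MM$ to be fibrant and the ex-space $X_{+B}$ cofibrant in the May--Sigurdsson model so that the adjunction computes the correct homotopy classes, and one must check that the fiberwise structure maps built from the $|N\uc{n}|$ are compatible with the classifying-space construction over $B$. The restriction to a trivial $G$-universe (naive $G$-spectra) should keep this manageable, since no transfer or genuine-universe indexing is required; the delicate part is ensuring the sectionwise equivalences assemble coherently over $\OG$ under Elmendorf's construction.
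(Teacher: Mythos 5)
Your argument is correct and its core coincides with the paper's proof of Theorem \ref{thm:tgspec}: both use the parametrized suspension--loop adjunction together with the fact that $J_G\MM$ is a level $qf$-fibrant $\Omega$-prespectrum to reduce $[\Sigma^\infty_{K_G(\upi,1)} X_{+K_G(\upi,1)},\Sigma_{K_G(\upi,1)}^n J_G \MM]_{K_G(\upi,1)}$ to the unstable homotopy classes $[X_{+K_G(\upi,1)}, L_{\upi}(M,n)]_{K_G(\upi,1)}$, and you correctly locate the real work in the $\Omega$-spectrum and fibrancy verification, which the paper carries out fixed-point-wise over $\OG$ (using Proposition \ref{padjoint} for level fibrancy and the non-equivariant spectra $\EE^\MM_\bullet(G/H)$ for the structure maps). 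The only divergence is the final identification: you pass from $[X, L_{\upi}(M,n)]_{K_G(\upi,1)}$ to $\pi_0\Coh\underline{\Crs}(\Pi_G(X),\uc{n})_{\chi_G(\upi,1)}$ via the crossed-complex adjunction (Eq.~\ref{eqadjoint}) and then invoke Theorem A, whereas the paper appeals directly to M{\o}ller's classification theorem (Theorem \ref{thmol}) and never reintroduces crossed complexes at this stage. Since Theorem A is itself deduced from Theorem \ref{thmol} combined with Eq.~\ref{eqadjoint}, your detour is logically equivalent, merely longer; a small incidental benefit is that routing through Theorem A automatically covers $n=0$ (via Corollary \ref{thmol0}), whereas the paper's citation of Theorem \ref{thmol} alone addresses only $n\geq 1$.
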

\noindent (See Theorem \ref{thm:tgspec} below.)

It is important to note that the use of classifying spaces of equivariant crossed complexes leads to an easy and explicit description of the parametrized spectrum in the above theorem. The assignment of a parametrized spectrum to an equivariant local coefficient system is clearly functorial.     

\begin{mysubsection}{Organization}
The paper is organized as follows. In Section \ref{scrscomplex}, we review some preliminaries on crossed complexes and the representation of cohomology with local coefficients in the category of crossed complexes. In section 3, we recall the definition of Bredon cohomology with local coefficients and prove a suspension isomorphism in this context. Using these results we represent Bredon cohomology with local coefficients using equivariant crossed completes. In Section \ref{slocparspc}, we construct a parametrized $\Omega$-spectrum $J_\pi(A)$ and show that the associated cohomology theory is cohomology with local coefficients. Finally in Section \ref{squipar} we construct a parameterized equivariant $\Omega$-spectrum $J_G\MM$ and prove an equivariant analogue.
\end{mysubsection}
\ack
We thank Prof. Goutam Mukherjee for many helpful discussions on this topic. We also thank Prof. David Blanc for taking the time to go through a draft of this paper and give a number of helpful suggestions to improve its presentation. We are grateful to Prof. Ronnie Brown for many useful suggestions for improvement in the article.

\section{Crossed complexes and cohomology with local coefficients}\label{scrscomplex}
In this section we recall basic definitions of crossed complexes and its relation with cohomology with local coefficients. 

\begin{mysubsection}{Crossed complexes}
Recall from \cite{rbrown3} that a crossed complex is a chain complex of modules over a groupoid, with possibly non-abelian automorphism groups of objects in degrees one and two. The notion of a crossed complex encodes the algebraic properties of the sequence $\lbrace \pi_n(X_{n},X_{n-1},v)_{v\in X_0}\rbrace_{n=1}^{\infty}$ associated to a filtration $X_{0} \subset X_{1} \subset \ldots$ of a topological space $X$.

A groupoid $\mathcal{G}$ is said to act on a totally disconnected groupoid $\mathcal{N}$ with same object set as $\mathcal{G}$, i.e., 
$\mathcal{N}=\sqcup_{x\in \ob(\mathcal{G})} N_x$ if there are action maps
$$N_x \times \mathcal{G} (x,y) \to N_y,~~x,y\in \ob(\mathcal{G}),$$
that respect the groupoid composition and the group structure in an obvious way.
If $\mathcal{N}$ is abelian then it is called an $\mathcal{G}$-module. 

%
\begin{defn} A \emph{crossed complex} $\CC$ consists of a set $C_0$, and groupoids $C_n$ for $n\geq 1$ with the same object set $C_0$. There are morphisms of groupoids $\delta_n\colon C_n \rightarrow C_{n-1}$ and an action of the groupoid $C_1$ on $C_n$ for $n\geq 2$. This satisfies the following conditions:
\begin{enumerate}[(i)]
\item For $n\geq 2$, $C_n$ is a totally disconnected groupoid, that is, $C_n$ is a family of groups $\lbrace C_n(v): v\in C_0=\ob(C_n)\rbrace$. The groups $C_n(v)$ are abelian if $n\geq 3$.
\item For $n\geq 2$, we denote the action $C_n \times C_1 \rightarrow C_n$ by $(c,c_1)\mapsto c^{c_1}$.  The action of $C_1$ on itself is by conjugation.
\item For $n\geq 2$ the morphism $\delta_n$ is the identity on the set of objects and commutes with the action of $C_1$. For $n\geq 3$, the composite $\delta_{n-1} \circ \delta_n$ is the map which takes all the morphisms to the identity. The morphisms in the image of $\delta_2$ act trivially on $C_n$ for $n\geq 3$ and by conjugation on $C_2$.
\end{enumerate}
\end{defn}
We write $s,t\colon C_1\rightarrow C_0$ for the source and target map of the groupoid $C_1$, and $t\colon C_n\rightarrow C_0,~n\geq 2,$ denotes the target, or the base point map, of the totally disconnected groupoid $C_n.$
\begin{defn}
 A \emph{morphism of crossed complexes} $f\colon \CC\rightarrow \DD$ is a family of morphisms of groupoids $f_n\colon C_n\rightarrow D_n,~n\geq 1,$ all inducing the same map of objects $f_0\colon C_0\rightarrow D_0$, such
that $\delta_n f_n(c) = f_{n-1}\delta_n(c)$ and $f_n(c^{c_1} ) = f_n(c)^{f_1(c_1)}$ for all $c\in C_n, c_1\in C_1,~n\geq 1$.
\end{defn}
We denote the category of crossed complexes by $\Crs$.
\begin{exam}
 Let $\{X_n\}_{n\in \mathbb{N}}$ be a filtration of a topological space $X$. Then, we have a crossed complex $\Pi(X)$, defined as:
 $$
\Pi(X)_n=
 \left\{ \begin{array}{lr}
 X_0 &\mbox{ if $n=0$} \\
 \pi_1(X_1,X_0) &\mbox{if $n=1$} \\
  \pi_n(X_n,X_{n-1},X_0) &\mbox{if $n>1$}
\end{array} \right.
$$
Here $\pi_1(X_1,X_0)$ is the fundamental groupoid of $X_1$ on the set $X_0$ of points,
and $\pi_n(X_n , X_{n-1},X_0)$ is the family of relative homotopy groups $\pi_n(X_n,X_{n-1},v)$
for all $v\in X_0$. The group $\pi_1(X_1,v)$ operates in the usual way on $\pi_n(X_n,X_{n-1},v)$, which gives the $\pi_1(X_1,X_0)$ action on $\pi_n(X_n , X_{n-1},X_0)$. The differential $\delta_n,~n\geq 3,$ is the composite $$\pi_n(X_n,X_{n-1},v)\rightarrow \pi_{n-1}(X_{n-1},v)\rightarrow \pi_{n-1}(X_{n-1},X_{n-2},v),$$ where the first map is the usual boundary of the pair $(X_n,X_{n-1})$ and the second map is induced by inclusion. The other differential $\delta_2\colon \pi_2(X_2,X_1,v)\rightarrow \pi_1(X_1,v)$ is the standard boundary map of the pair $(X_2,X_1)$. With these structures, $\Pi(X)$ is a crossed complex and called the \emph{fundamental crossed complex} of the filtered topological space $X$.
\end{exam}
If $X_0=*$, then we simply write $\Pi(X)_n=\pi_n(X_n,X_{n-1})$. The category of filtered spaces, denoted by $\FTop$, has objects filtered topological spaces and morphisms continuous maps which respect the filtration. Then $\Pi$ is a functor from $\FTop$ to $\Crs$. For a CW complex $X$, the associated crossed complex is given by the skeleton filtration.

\begin{defn}\cite[Definition 7.1.38]{brownb}\label{homotopy}
 Let $f,g\colon\CC\rightarrow \DD $ be maps between two crossed complexes. A \emph{homotopy} from $f$ to $g$, written as $\HH\colon f\sim g$, is a sequence of maps $\HH_n\colon C_n\rightarrow D_{n+1},~n\geq 0$ which satisfy the following properties:
\begin{enumerate}[(a)]
\item For $c\in C_n,~n\geq 0$,
$$\HH_n(c)\in
 \left\{ \begin{array}{rl}
  D_1(f_0(c),g_0(c))&\mbox{ if $n=0$} \\
  D_{n+1}(tg_n(c))&\mbox{ if $n\geq 1$}
       \end{array} \right.
       $$
\item If $c,c^{'}\in C_n$ and $c c^{'}$ or $c+c^{'}$ is defined according as $n=1$ or $n\geq 2$,
then
\begin{eqnarray*}
&& \HH_1(c c^{'}) =\HH_1(c)^{g_1(c)}\HH_1(c^{'}) \\
&& \HH_n(c+c^{'})=\HH_n(c)+\HH_n(c^{'})~~ \mbox{if}~~ n\geq 2.
\end{eqnarray*}
\item
For $n\geq 2$, $\HH_n$ preserves the action over $g$, i.e., if $c\in C_n,~n\geq 2,~c_1\in C_1$, and $c^{c_1}$ is defined, then
$
 \HH_n(c^{c_1}) = \HH_n(c)^{g_1(c_1)}.
$
\item
The pair $(\HH,g)$ determines the initial morphism $f$; if $c\in C_n,~n\geq 0$ then:
$$f_n(c)=
 \left\{ \begin{array}{lr}
 s\HH_0(c)&\mbox{ if $n=0$} \\
 \HH_0(sc)g_1(c)\delta_2 \HH_1(c) \HH_0(tc)^{-1}&\mbox{ if $n=1$} \\
  \{g_n(c)+\HH_{n-1}\delta_n(c) +\delta_{n+1}\HH_n(c)\}^{\HH_0(tc)^{-1}}&\mbox{ if $n\geq 2$}
       \end{array} \right.
       $$
\end{enumerate}
\end{defn}
{\bf Note}: The axiom (d) in the above definition reflects the condition $f-g = dH + Hd$ for a homotopy $H$ between maps $f,g$ of chain complexes.
 
The category $\Crs$ has a model category structure (see \cite[Theorem 2.12]{rbrown4}) and the `homotopy' relation is an equivalence relation on the set of morphisms of crossed complexes (see \cite{qui}). We denote the homotopy classes of maps from $\CC$ to $\DD$ by $[\CC,\DD].$

The category $\Crs$ of crossed complexes has internal hom of maps $\CRS(\CC,\DD)$ (cf. \cite[Theorem 9.3.6]{brownb}). The groupoid $\CRS_0(\CC,\DD)$ is defined to be the set $\Crs(\CC,\DD)$ of crossed complex maps, $\CRS_1(\CC,\DD)(f,g)$ is the set of homotopies from $f$ to $g$ and $\CRS_m(\CC,\DD)(f)$ is the set of $m$-fold homotopies from $\CC$ to $\DD$ over $f$, defined as follows.
\begin{defn}\cite[Section 9.3.i]{brownb}\label{mhomotopy}
Let $m\geq 2$. An \emph{$m$-fold homotopy} $\HH$ from $\CC$ to $\DD$ over a morphism $f\colon \CC\rightarrow\DD$, is given by maps $\HH_n\colon C_n\rightarrow D_{n+m}$ for each $n\geq 0$ which satisfy:
\begin{enumerate}[(a)]
\item For $n\geq 2$, if $c\in C_n$ and $c_1 \in C_1$,
$$\HH_n(c^{c_1}) = \HH_n(c)^{f_1(c_1)}$$
\item $\HH_1$ is a derivation over $f$, i.e. if $c,c'\in C_n$ and $c+c'$ is defined then
$$\HH_1(c+c')= \HH_1(c)^{f_1(c')}+ \HH_1(c')$$
\item For $n\geq 2$ the $\HH_n$ are morphisms, i.e. if $c,c' \in C_n$ and $c+c'$ is defined then
$$\HH_n(c+c')=\HH_n(c)+\HH_n(c').$$
\end{enumerate}
\end{defn}
Furthermore $\Crs$ is a complete and cocomplete simplicially enriched category in which all the `hom-sets' are Kan complexes (See \cite{rbrown2}).

We shall denote the category of simplicial sets and simplicial maps by $\Ss$. Given a simplicial set $K\in \Ss$ the geometric realization $\realz{K}$ of $K$ has a natural filtration of skeleta. Composing further with the functor $\Pi$ we get a functor from $\Ss$ to $\Crs$. This is the fundamental crossed complex functor of simplicial sets which, by abuse of notation, we also denote by $\Pi$.

It is proved in \cite{rbrown3} that the functor $\Pi$ has a right adjoint and hence it preserves colimits. The right adjoint is defined using the nerve of a crossed complex.

\begin{defn}\label{dnerve}
 The \emph{nerve functor} $N\colon \Crs\rightarrow \Ss$ is defined by $$N^{\Delta}(\CC)_n:=\Crs(\Pi(\Delta^n),\CC),$$ where $\Delta^n$ is the standard topological $n$-simplex with standard cell structure and cellular filtration. The simplicial maps of $N^{\Delta}(\CC)$ are induced by the face and degeneracy maps of $\Delta^n$.

The \emph{classifying space} $\BB(\CC)$ of a crossed complex $\CC$ is defined as $\realz{N^{\Delta}(\CC)}$, the geometric realization of the simplicial set $N^{\Delta}(\CC)$.
\end{defn}
We have the following result.
\begin{thm}[Theorem A, \cite{rbrown3}]\label{thadjoint}
 If $X$ is a CW complex, and $\CC$ is a crossed complex, then there is a weak
homotopy equivalence
$$\eta\colon \BB(\CRS(\Pi(X),\CC))\rightarrow \Maps_{\Top}(X,\BB(\CC)),$$
and a bijection of sets of homotopy classes
\begin{myeq}\label{eqadjoncrs}
[\Pi(X),\CC]\cong [X,\BB(\CC)] 
\end{myeq}
which is natural with respect to morphisms of $\CC$ and cellular maps of $X$.
\end{thm}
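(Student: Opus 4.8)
The plan is to model both sides as geometric realizations of simplicial sets, to construct a natural simplicial map between them, and to prove that this map is a weak equivalence; the bijection \eqref{eqadjoncrs} then drops out by applying $\pi_0$. I use three structural inputs. First, the adjunction $\Pi\dashv N^{\Delta}$ of Definition \ref{dnerve}, giving $\Crs(\Pi K,\CC)\cong\Ss(K,N^{\Delta}\CC)$ naturally in a simplicial set $K$ and a crossed complex $\CC$. Second, the symmetric monoidal closed structure on $\Crs$ (see \cite{brownb}), with internal hom $\CRS$ and tensor $\otimes$, so that $\Crs(\A\otimes\DD,\CC)\cong\Crs(\A,\CRS(\DD,\CC))$. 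Third, the Eilenberg--Zilber type natural isomorphism $\Pi(Y\times X)\cong\Pi Y\otimes\Pi X$ for CW complexes, which holds because $\Pi$ preserves colimits and agrees with the tensor product on the product cells $\Delta^p\times\Delta^q$. I also use that $N^{\Delta}\CC$ is a Kan complex for every $\CC$, part of the enriched structure recalled above.

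First I would identify the $n$-simplices on each side. Using Definition \ref{dnerve}, the closed structure, and the product isomorphism,
\[
N^{\Delta}\bigl(\CRS(\Pi X,\CC)\bigr)_n=\Crs\bigl(\Pi\Delta^n,\CRS(\Pi X,\CC)\bigr)\cong\Crs\bigl(\Pi\Delta^n\otimes\Pi X,\CC\bigr)\cong\Crs\bigl(\Pi(\Delta^n\times X),\CC\bigr),
\]
while $\bigl(\Sing\Maps_{\Top}(X,\BB\CC)\bigr)_n=\Top(\Delta^n\times X,\BB\CC)$. I would then define a natural simplicial map $\Phi\colon N^{\Delta}\CRS(\Pi X,\CC)\to\Sing\Maps_{\Top}(X,\BB\CC)$ by sending a morphism $\Pi(\Delta^n\times X)\to\CC$ to the composite $\Delta^n\times X\to\BB\Pi(\Delta^n\times X)\to\BB\CC$, where the first arrow is the natural unit $W\to\BB\Pi W$ of a CW complex into the classifying space of its fundamental crossed complex and the second is induced by the given morphism. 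Functoriality of $\BB\Pi$ and of this unit makes $\Phi$ well defined and simplicial, and the map $\eta$ of the theorem is $\realz{\Phi}$ followed by the counit $\realz{\Sing\Maps_{\Top}(X,\BB\CC)}\to\Maps_{\Top}(X,\BB\CC)$; since the latter is always a weak equivalence, $\eta$ is a weak equivalence if and only if $\Phi$ is.

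Next I would show $\Phi$ is a weak equivalence by induction on the skeletal filtration $X^{(0)}\subset X^{(1)}\subset\cdots$. The crucial point is that both functors $X\mapsto N^{\Delta}\CRS(\Pi X,\CC)$ and $X\mapsto\Sing\Maps_{\Top}(X,\BB\CC)$ send the cell-attaching pushouts of $X$ to pullbacks along Kan fibrations, compatibly with $\Phi$: on the crossed side $\Pi$ turns a CW pair into a free inclusion of crossed complexes, $\CRS(-,\CC)$ turns such inclusions into fibrations in the model structure of \cite{rbrown4}, and $N^{\Delta}$ carries these to Kan fibrations; on the topological side restriction along $X^{(n-1)}\hookrightarrow X^{(n)}$ is a fibration of mapping spaces, preserved by $\Sing$. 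The base case $X=\ast$ is the unit map $N^{\Delta}\CC\to\Sing\realz{N^{\Delta}\CC}$ of the $(\realz{\cdot},\Sing)$ adjunction, a weak equivalence because $N^{\Delta}\CC$ is Kan. For the inductive step, attaching $n$-cells reduces, via the product formula and the identification $\CRS(\Pi\Delta^n,\CC)\simeq\CC$ that follows from the contractibility of $\Pi\Delta^n$, to the value of $\Phi$ on disks and spheres; a ladder of the associated fibration sequences and the five lemma then propagate the equivalence from $X^{(n-1)}$ to $X^{(n)}$. Finally, both sides are limits of towers of Kan fibrations over the skeleta, so levelwise equivalences yield an equivalence for all of $X$. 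Assembling these cellwise comparisons into a coherent map of fibration sequences is the step I expect to be the main obstacle.

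Lastly I would read off the classification statement by applying $\pi_0$ to $\eta$. On the left, the vertices of $N^{\Delta}\CRS(\Pi X,\CC)$ are the morphisms $\Pi X\to\CC$ and its $1$-simplices are exactly the homotopies between them, i.e.\ the elements of $\CRS_1(\Pi X,\CC)$, so that $\pi_0 N^{\Delta}\CRS(\Pi X,\CC)=[\Pi X,\CC]$ once one invokes that homotopy is an equivalence relation on morphisms of crossed complexes. On the right, $\pi_0\Maps_{\Top}(X,\BB\CC)=[X,\BB\CC]$. The weak equivalence $\eta$ therefore induces the natural bijection $[\Pi X,\CC]\cong[X,\BB\CC]$ of \eqref{eqadjoncrs}, with naturality in $\CC$ and in cellular maps of $X$ inherited from that of $\Phi$, of the unit $W\to\BB\Pi W$, and of the counit of $\Sing$.
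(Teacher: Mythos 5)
This theorem is quoted in the paper from Brown--Higgins \cite{rbrown3} without proof, so there is no internal argument to compare against; your proposal is, however, a correct reconstruction of the proof given in that reference. The same three inputs you isolate --- the adjunction $\Pi\dashv N^{\Delta}$, the monoidal closed structure together with the Eilenberg--Zilber isomorphism $\Pi(\Delta^n\times X)\cong\Pi\Delta^n\otimes\Pi X$, and the fact that $N^{\Delta}$ carries fibrations of crossed complexes to Kan fibrations --- drive essentially the same skeletal induction there, and the coherence issue you flag at the end is exactly where the cited source invests its technical work.
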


We can encode the entire information above in the following diagram of categories and adjoint functors.
$$\xymatrix{ \Top \ar@<-0.5ex>[rrr]_{\Sing} &&& \Ss \ar@<0.5ex>[d]^{\Pi} \ar@<0.5ex>[dlll]^{\realz{-}} \ar@<-0.5 ex>[lll]_{\realz{-}} \\
             \FTop\ar[rrr]_{\Pi} \ar[u]^{\colim}&&& \Crs \ar@<0.5ex>[u]^{N^{\Delta}} }.$$

\end{mysubsection}

\begin{mysubsection}{Cohomology with local coefficients and crossed complexes}\label{sloccrs}
In this section we recall the definition of cohomology with local coefficients and its representability result in the category of crossed complexes (see \cite{rbrown3}).
\begin{mysubsection}{Local coefficient system}\label{cohloc}
Let $X$ be a topological space. A \emph{local coefficient system} on $X$ is a contravariant functor from $\A\colon \pi X\rightarrow \Ab$, where $\Ab$ denotes the category of abelian groups. 
Recall that the fundamental groupoid $\pi X$ is a category whose objects are points of $X$ and morphism from $v\in X$ to $w\in X$ is the set of homotopy classes of paths from $v$ to $w$.
A continuous map $f\colon X\rightarrow Y$ induces a functor $\pi X\rightarrow \pi Y$ and hence a local coefficient system $\A$ on $Y$ induces a local coefficient system $f^*(\A)$ on $X$.

Suppose $X$ is path-connected. Then a local coefficient system on $X$ is equivalent to $\pi_1(X,v)$-module, for a chosen point $v \in X$.
\end{mysubsection}

\begin{mysubsection}{Cohomology with local coefficients}\label{lc}
Let $\A$ be a local coefficient system on a topological space $X$. Denote by $C^{n}(X;\A)$ the group of all functions $f$ defined on singular $n$-simplices $\sigma\colon \Delta^n\rightarrow X$ such that $f(\sigma)\in \A(\sigma(0))$. Define a homomorphism $$\delta\colon C^{n-1}(X;\A)\rightarrow C^{n}(X;\A),~f\mapsto \delta f,$$ by
$\delta f(\sigma)=\A(\sigma|_{[0,1]})f(\sigma^{(0)})+\sum_{j=1}^{n}(-1)^{j}f(\sigma^{(j)})$, where $\sigma^{(j)}$ denotes the $j$-th face of $\sigma$. Then $\{C^{*}(X;\A),\delta \}$ is a cochain complex.
\begin{defn}
Let $\A$ be a local coefficient system on a topological space $X$. Then the $n$-th cohomology of $X$ with local coefficients $\A$ is defined by $$H^n(X;\A):=H^n(\{C^*(X;\A),\delta\}).$$
\end{defn}
\end{mysubsection}

\begin{mysubsection}{Representing cohomology with local coefficients}\label{ucs}
Let $\pi$ be a group and $(A,\phi)$ be a $\pi$-module. Following \cite[Definition 7.1.11]{brownb}, we first define representing crossed complexes $\chi_{\phi}(A,n),~ \chi(\pi,n)$ together with maps of crossed complexes $p\colon \chi_{\phi}(A,n)\rightarrow \chi(\pi,1)$ for each $n\geq 0$.

\begin{defn}
For a group $\pi$, the crossed complexes $\chi(\pi ,n),~n\geq 1,$ are defined by the formula:
$$\chi(\pi,n)_m=
 \left\{ \begin{array}{rl}
 \pi &\mbox{ if $m=n$} \\
  * &\mbox{ otherwise}
       \end{array} \right.
       $$
The $\delta_i$'s and the action of $ \chi(\pi,n)_1$ on $\chi(\pi,n)_m$ are obvious.

The crossed complex $\chi(\pi,0)$ is defined by
 $$\chi(\pi,0)_m=
 \left\{ \begin{array}{rl}
 U(\pi) &\mbox{ if $m=0$} \\
  \lbrace \id_x | x\in \pi\rbrace &\mbox{ otherwise}
       \end{array} \right.
       $$
   Here $U(\pi)$ denote the underlying set of $\pi$.
 The maps $\delta_i$ and the actions of $\chi(\pi,0)_1$ are automatically fixed.
\end{defn}
 The classifying space $\BB\chi(\pi,n) = \realz{N^\Delta (\chi(\pi,n))}$ is the Eilenberg-Mac~Lane space $K(\pi,n)$.

\begin{defn}
For a $\pi$-module $(A,\phi)$, we define a fibration (\cite{rbrown3}) of crossed complexes $p\colon \chi_{\phi}(A,n)\to \chi(\pi,1),~n\geq 0$ as follows:
\begin{itemize}
\item For $n\geq 2$ the crossed complex $\chi_{\phi}(A,n)$ is defined by:
 $$ \chi_{\phi}(A,n)_m=
 \left\{ \begin{array}{rl}
 \pi &\mbox{ if $m=1$} \\
 A &\mbox{if $m=n$} \\
  * &\mbox{ otherwise}
       \end{array} \right.
       $$
The $\delta_i$ are all trivial and the action of $\pi$ on $A$ is given by $\phi$. Note that we have a canonical map $p\colon\chi_{\phi}(A,n)\rightarrow \chi(\pi,1)$, which is the identity on the first level and trivial on all the other levels.

\item The crossed complex $\chi_{\phi}(A,1)$ is defined using the equation
 $$ \chi_{\phi}(A,1)_m=
 \left\{ \begin{array}{rl}
 \pi\ltimes A &\mbox{ if $m=1$} \\
  * &\mbox{ otherwise}
       \end{array} \right.
       $$
In this case the map $p\colon \chi_{\phi}(A,1) \rightarrow \chi(\pi,1)$ is the projection at the first level.

\item To define $\chi_{\phi}(\pi,0)$, we first note the definition of $\mathit{Gpd}(G,M)$, the translation  groupoid associated to a group action of $G$ on a set $M$. It has $M$ as the set of objects and $g\in G$ is a morphism from $m\in M$ to $m'\in M$ if $gm=m'$. There is a functor from $\mathit{Gpd}(G,M)$ to $G$ (considered as groupoid with one object). The crossed complex $\chi_{\phi}(A, 0)$ is defined by
$$\chi_{\phi}(A,0)_m= \left\{ \begin{array}{rl}
 A &\mbox{ if $m=0$} \\
 \mathit{Gpd}(\pi,A) &\mbox{ if $m=1$}\\
 \lbrace \id_x | x\in A\rbrace &\mbox{ otherwise}
       \end{array} \right.
       $$
 The maps $\delta_i$ and the action is automatically fixed. The map to $\chi(\pi,1)$ is trivial on all dimensions except at 1 where it is the map between groupoids described above.
\end{itemize}
\end{defn}
The classifying space $\BB(\chi_{\phi}(A,n))=\realz{N^{\Delta}(\chi_{\phi}(A,n))}$ is the generalized Eilenberg-Mac~Lane space $L_{\pi}(A,n),$ which is the representing space for cohomology with local coefficients (see \cite{git, hir,bfg,gj}).

Let $X$ be a reduced CW complex and $\alpha\colon \pi_1(X,*)\rightarrow \pi$ be a group homomorphism. We can view $A$ as a $\pi_1(X,\ast)$-module via $\alpha$ and hence $\alpha$ determines a local coefficient system $\A_{\phi\alpha}$ on $X$.
\begin{thm}\label{thbprop}\cite[Proposition 4.9]{rbrown3}
 With $X$ as above, $H^n(X;\A_{\phi\alpha})$ is isomorphic to $[\Pi(X),\chi_{\phi}(A,n)]_{\alpha}$ for $n\geq 2$, where $[\Pi(X),\chi_{\phi}(A,n)]_{\alpha}$ denotes the homotopy classes of maps from $\Pi(X)$ to $\chi_{\phi}(A,n)$ inducing $\alpha$ on fundamental groups.
\end{thm}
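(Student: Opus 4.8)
The plan is to unwind the definition of a crossed complex morphism $\Pi(X)\to\chi_\phi(A,n)$ directly and match it with the cellular cochain complex of $X$ with local coefficients. Since $X$ is reduced, $\Pi(X)_0=\ast$ and $\Pi(X)_m=\pi_m(X_m,X_{m-1})$; as $\chi_\phi(A,n)_m=\ast$ for $m\neq 1,n$ (with $n\geq 2$), any morphism $f$ is forced to be trivial outside levels $1$ and $n$. The level-one component $f_1\colon\pi_1(X_1)\to\pi$ must kill the relations coming from $\delta_2$ (because $\delta$ is trivial on $\chi_\phi(A,n)$), so it factors through $\pi_1(X)$; requiring $f$ to induce $\alpha$ pins this factorization down to $\alpha$ itself. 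The only remaining data is $f_n\colon\pi_n(X_n,X_{n-1})\to A$, which by the morphism axioms is a homomorphism equivariant with respect to $\alpha$ and the $\pi$-action $\phi$ on $A$. First I would identify $\pi_n(X_n,X_{n-1})$ with the free $\ints[\pi_1(X)]$-module $C_n(\tX)$ of cellular $n$-chains of the universal cover, so that an $\alpha$-equivariant $f_n$ is exactly a cochain in $C^n(X;\A_{\phi\alpha})=\Hom_{\ints[\pi_1(X)]}(C_n(\tX),A)$. Commutation of $f$ with $\delta_{n+1}$, together with triviality of $\delta$ on the target, reads $f_n\circ\delta_{n+1}=0$, which is precisely the cocycle condition.

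Next I would carry out the same bookkeeping for homotopies. For two morphisms $f,g$ inducing $\alpha$, a homotopy compatible with the $\alpha$-structure is one lying over $\chi(\pi,1)$; since $p$ is the identity on level one, this forces the level-zero datum $\HH_0(\ast)\in\pi$ to be the identity, so the conjugation term $\HH_0(tc)^{-1}$ in Definition \ref{homotopy}(d) becomes trivial. The only remaining nontrivial component is then $\HH_{n-1}\colon\pi_{n-1}(X_{n-1},X_{n-2})\to A$, an $(n-1)$-cochain, and because $\delta_{n+1}$ is trivial on $\chi_\phi(A,n)$ the level-$n$ clause collapses to $f_n=g_n+\HH_{n-1}\delta_n$, i.e. $f_n-g_n=\delta^{n-1}\HH_{n-1}$ is a coboundary. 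Conversely every coboundary arises in this way. Hence homotopic $\alpha$-morphisms give cohomologous cocycles and conversely, and passing to classes yields $[\Pi(X),\chi_\phi(A,n)]_\alpha\cong Z^n/B^n=H^n(X;\A_{\phi\alpha})$, after the standard identification of the singular local-coefficient cohomology of \S\ref{lc} with its cellular counterpart.

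The main obstacle, and the point requiring genuine care, is the precise meaning of ``inducing $\alpha$'' at the level of homotopies: one must fix that the homotopies quotiented out are those lying over $\chi(\pi,1)$ (equivalently, preserving the level-one map $\alpha$), for otherwise a nontrivial $\HH_0(\ast)$ centralizing $\mathrm{im}(\alpha)$ would act by $\phi$ on the level-$n$ data and spoil the clean coboundary formula or introduce an identification by conjugation. This is exactly where the ``reduced'' hypothesis and the exact (rather than up-to-conjugacy) condition on $\alpha$ enter, and where $n\geq 2$ is essential, since for $n\leq 1$ the targets $\chi_\phi(A,1),\chi_\phi(A,0)$ are genuinely non-abelian and the computation above fails. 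A more conceptual alternative bypasses the bookkeeping: Theorem \ref{thadjoint} gives $[\Pi(X),\chi_\phi(A,n)]\cong[X,\BB\chi_\phi(A,n)]=[X,L_\pi(A,n)]$ naturally, and naturality applied to the fibration $p\colon\chi_\phi(A,n)\to\chi(\pi,1)$ (realizing to $L_\pi(A,n)\to K(\pi,1)$) matches the $\alpha$-classes on the left with homotopy classes of lifts over $K(\pi,1)$ of the map classifying $\alpha$; Gitler's theorem then identifies these with $H^n(X;\A_{\phi\alpha})$.
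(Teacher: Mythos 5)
Your proposal is correct in substance, but note first that the paper does not actually prove Theorem \ref{thbprop}: it is quoted from Brown--Higgins \cite[Proposition 4.9]{rbrown3}, and the closest the paper comes to an argument is the later remark that the statement agrees (for reduced $X$ and $n\geq 2$) with Corollary \ref{trloc}, which is in turn deduced from M{\o}ller's classification (Theorem \ref{thmol}) together with the adjunction between $\Pi$ and the classifying space $\BB$. So the ``conceptual alternative'' you sketch at the end is essentially the route this paper takes for its own version of the result, while your main argument --- unwinding morphisms $\Pi(X)\to\chi_{\phi}(A,n)$ and their homotopies into cellular cochains of the universal cover --- is the hands-on argument in the spirit of Brown--Higgins' original proof. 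The direct route buys an explicit cocycle-level dictionary and makes visible exactly where reducedness and $n\geq 2$ enter; the adjunction route buys naturality in $X$ and in the coefficients, which is what the paper needs to pass to the equivariant setting.

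Two points in your computation deserve more care, though neither is fatal. First, the identification of $\pi_n(X_n,X_{n-1})$ with the free $\ints[\pi_1(X)]$-module $C_n(\tX)$ is Whitehead's theorem only for $n\geq 3$; for $n=2$ the group $\pi_2(X_2,X_1)$ is the free \emph{crossed} $\pi_1(X_1)$-module on the $2$-cells, and you must argue separately that a morphism into $\chi_{\phi}(A,2)$ (abelian, with trivial $\delta_2$) kills the Peiffer commutators and hence factors through the abelianization $C_2(\tX)$; likewise, for $n=2$ the homotopy datum $\HH_1$ is a derivation over $g$ rather than a module map, and one needs the standard correspondence between derivations on the free groupoid $\pi_1(X_1,X_0)$ and module maps $C_1(\tX)\to A$. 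Second, your caveat about the meaning of ``inducing $\alpha$'' is not optional pedantry: with genuinely free homotopies the statement is false. For example, take $X=S^n$ with one $0$-cell and one $n$-cell, $\alpha$ trivial and $\phi$ nontrivial; then every morphism induces $\alpha$, and a free homotopy with $\HH_0(\ast)=x$ identifies $f_n$ with $\phi(x)^{-1}\circ f_n$, so the left-hand side would be the orbit set $A/\pi$ rather than $A=H^n(S^n;A)$. The homotopies must be taken over $\chi(\pi,1)$, equivalently with $\HH_0$ trivial, exactly as you say and exactly as the paper's identification of $[\Pi(X),\chi_{\phi}(A,n)]_{\alpha}$ with the over-category homotopy classes in Corollary \ref{trloc} presupposes. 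With that convention fixed, your cocycle/coboundary bookkeeping is correct and the argument goes through.
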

\end{mysubsection}
\end{mysubsection}

\section{ Bredon cohomology with local coefficients and equivariant crossed complexes}\label{seqtheo}
In this section we review the definition of Bredon cohomology with local coefficients and write down a suspension isomorphism for it. Using a result of M{\o}ller (\cite{m}), we formulate an equivariant version of Theorem \ref{thbprop}. 

\begin{mysubsection}{Bredon cohomology with local coefficients}\label{sbre}
We first recall the \emph{orbit category} $\OG$ of a discrete group $G$ and the associated notions.

The objects of the category $\OG$ are the coset spaces $G/H=\{gH|~g\in G\}$, as $H$ runs over the all subgroups of $G$. The group $G$ acts on the set $G/H$ by left translation. A morphism from $G/H$ to $G/ K$ is a $G$-map. Note that a subconjugacy relation $a^{-1}Ha\subseteq K,~a\in G,$ determines a $G$-map $\ha\colon G/H\rightarrow G/K$, given by $\hat{a}(eH)=aK$. Conversely, any $G$-map from $G/H$ to $G/K$ is of this form (cf. \cite{br}).
\begin{defn}\label{oggrp}
A functor $\upi$ from $\OGop$ to the category $\Grp$ of groups is called an \emph{$\OG$-group}. A map between $\OG$-groups is a natural transformation of functors.
\end{defn}
 The category of $\OG$-groups is denoted by $\OG$-$\Grp$.
The notion of \emph{$\OG$-space} or \emph{abelian $\OG$-group} has the obvious meaning replacing $\Grp$ by $\Top$ or $\Ab$, the category of abelian groups. Similarly, we can talk of \emph{$\OG$-crossed complexes}.
For a $G$-space $X$, we have an $\OG$-space $\Phi X$, defined by, 
\begin{myeq}\label{eqphi}
 \Phi X(G/H):=X^H=\{x\in X|~gx=x~\mbox{for all}~~g\in H\}
\end{myeq}
for each object $G/H$ of $\OG$ and $\Phi X(\ha)(x)=ax,~x\in X^K,$ for morphism $\ha\colon G/H\rightarrow G/K$ of $\OG$. Thus we have a functor $\Phi\colon G\mbox{-}\Top\rightarrow \OG\mbox{-}\Top$.
For a $G$-space $X$ with $G$-fixed point $v$, we have $\OG$-groups $\upi_n(X),~~n\geq 1$, defined by $\upi_n(X)(G/H)=\pi_n(X^H,v)$ for each subgroup $H\leq G$. 
\begin{defn}\label{functor}
An \emph{equivariant local coefficient system} $\MM$ on a $G$-space $X$ is a local coefficient system $\MM(G/H)\colon \pi X^H\rightarrow \Ab$ on $X^H$ for each $H\leq G,$ such that for each morphism $\ha\colon G/H\rightarrow G/K$, there is a natural transformation $\MM(\ha)\colon \MM(G/K)\rightarrow \Phi X(\ha)^*\MM(G/H)$.
\end{defn}
\begin{defn} Let $\upi$ be an $\OG$-group and $M$ be an abelian $\OG$-group. A $\upi$-\emph{module} structure on $M$ is a natural transformation $\uphi\colon \upi\times M\to M,$ that defines a $\upi(G/H)$-module structure on $M(G/H)$ for each subgroup $H\leq G$. 
\end{defn}
Note that, if $X$ is a $G$-connected pointed $G$-space, then a $\upi_1(X)$-module is same as an equivariant local coefficients on $X$.

Let $X$ be a $G$-space and $\MM$ be an equivariant local coefficient system on it. 
Define $C^n_G(X;\MM)$ to be the group of all arrays $$ f=(f(G/H))\in \bigoplus_{H\leq G}C^n(X^H;\MM(G/H))$$ such that $f(G/H)(a\sigma)=\MM(\ha)(\sigma(0))f(G/K)(\sigma)$, where $a^{-1}Ha\subseteq K$ and $\sigma\colon \Delta^n\rightarrow X^K$ is an singular $n$-simplex in $X^K$. We get a cochain complex $\{C^*_G(X;\MM),\delta\}$ by taking the direct sum of of the coboundary of $C^*(X^H;\MM(G/H))$ for all $H\leq G$.
\begin{defn}\label{localbredon}{\cite{m,mm,ms}}
The $n$-th \emph{Bredon cohomology of a $G$-space $X$ with local coefficients} $\MM$ is defined to be
$$H^n_{G}(X;\MM):= H^n(\{C^*_G(X;\MM),\delta\}).$$
\end{defn}
\noindent If $M$ is an (abelian) $\OG$-group and $n\geq 1$ is an integer, then $K_G(M,n)$ denotes an equivariant Eilenberg-Mac~Lane space i.e., any $G$-connected pointed $G$-space $G$-homotopy equivalent to a $G$-CW complex with $\upi_n(K_G(M,n))=M$ and $\upi_i(K_G(M,n))=0$ for $i\neq n$ (see \cite{elm}). 

\noindent Given a $\upi$ module $(M,\uphi)$, there is a sectioned $G$-fibration 
\begin{myeq}\label{eqfib}
K_G(M,n)\hookrightarrow L_{\upi}(M,n)\xrightarrow{p} K_G(\upi,1),~~n\geq 1,
\end{myeq}
\noindent of $G$-connected pointed $G$-spaces $G$-homotopy equivalent to $G$-CW complexes realizing the given module structure as the associated action of $\upi_1(K_G(\upi,1))=\upi$ on $\upi_n(K_G(M,n))=M$ (See \cite{m}). 

In the following we are going to use the vertical homotopy classes of maps $[X, L_{\upi}(M,n)]_ {K_G(\upi,1)}$ for a space $X$ over $K_G(\upi,1)$. Any sectioned fibration $K_G(M,n) \to L \to K_G(\upi,1)$ with a fixed action of $\upi$ on $M$ yields the same homotopy classes.  We will henceforth call the total space of the fibration $L_{\upi}(M,n)$ and any two are $G$-homotopy equivalent. 

A $\upi$-module $\MM=(M,\uphi)$ gives an equivariant local coefficient system on $K_G(\upi,1)$, which we also denote by $\MM$. Suppose that $\theta\colon X\to K_G(\upi,1)$ is a map of $G$-CW complexes. This gives an equivariant local coefficient system $\theta^*\MM$ on $X$. 
Let $[X,L_{\upi}(M,n)]_{K_G(\upi,1)}$ denote the set of homotopy classes of maps in the over category $G\mbox{-}\CW/K_G(\upi,1)$ from $(X,\theta)$ to $(L_{\upi}(M,n),p)$. Note that $[X,L_{\upi}(M,n)]_{K_G(\upi,1)}$ is non-empty as the $G$-fibration \ref{eqfib} is sectioned.
\begin{thm}{\cite{m}}\label{thmol}
 With notations as above, there is a bijection $$H^n_G(X;\theta^*\MM)\cong   [X,L_{\upi}(M,n)]_{K_G(\upi,1)},~n\geq 1.$$ 
\end{thm}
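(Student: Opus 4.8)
The plan is to deduce the equivariant statement from the non-equivariant representability result, applied simultaneously on every fixed-point space $X^H$, and then to reassemble the fixed-point data by means of Elmendorf's equivalence between $G$-spaces and contravariant $\OG$-diagrams. The guiding principle is that both sides of the asserted bijection are, in a precise sense, ends (compatible families) over the orbit category $\OG$ of the corresponding non-equivariant objects, so the problem reduces to a level-wise identification that is natural in $\OG$.

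First I would unwind the definition of $C^*_G(X;\MM)$. By construction it is the subcomplex of $\bigoplus_{H\le G} C^*(X^H;\MM(G/H))$ cut out by the array condition $f(G/H)(a\sigma)=\MM(\ha)(\sigma(0))\,f(G/K)(\sigma)$; equivalently it is an inverse limit over $\OGop$ of the ordinary local-coefficient cochain complexes of the spaces $X^H$. Consequently an element of $H^n_G(X;\theta^*\MM)$ is exactly a family of classes in $H^n(X^H;\theta^*\MM(G/H))$ compatible under all structure maps $\ha\colon G/H\to G/K$. For each fixed $H$, the coefficient system $\theta^*\MM(G/H)$ on $X^H$ is the $\pi_1(X^H)$-module obtained by pulling $M(G/H)$ back through $\theta^H\colon X^H\to K(\upi(G/H),1)$, so the space-level form of Theorem \ref{thbprop} (Gitler's theorem) supplies a bijection
\[
H^n(X^H;\theta^*\MM(G/H))\;\cong\;[X^H,\,L_{\upi(G/H)}(M(G/H),n)]_{K(\upi(G/H),1)},
\]
natural in $X^H$ and in the coefficient data.

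Next I would use that the $G$-space $L_\upi(M,n)$ of the sectioned $G$-fibration \eqref{eqfib} is built, via Elmendorf's construction, precisely so that its $H$-fixed points recover these non-equivariant representing spaces and so that \eqref{eqfib} restricts on $H$-fixed points to the Gitler fibration $L_{\upi(G/H)}(M(G/H),n)\to K(\upi(G/H),1)$. Applying the fixed-point functor $\Phi$ and Elmendorf's equivalence then rewrites $[X,L_\upi(M,n)]_{K_G(\upi,1)}$ as the homotopy classes of maps of $\OG$-diagrams $\Phi X\to\Phi L_\upi(M,n)$ lying over $\Phi K_G(\upi,1)=K(\upi(-),1)$. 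Because we work throughout with $G$-CW complexes, the fixed-point diagrams are cofibrant, so these homotopy classes are computed level-wise subject to naturality in $\OG$, with no $\varprojlim^1$ obstruction to lifting a compatible family of level-wise homotopy classes to a genuine diagram map.

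The final and crucial step is to check that the naturality condition on a map of $\OG$-diagrams over $K(\upi(-),1)$ corresponds, under the level-wise Gitler bijections, exactly to the array-compatibility $f(G/H)(a\sigma)=\MM(\ha)(\sigma(0))\,f(G/K)(\sigma)$ defining $C^*_G(X;\MM)$. Verifying this is where the main obstacle lies: one must show that the naturality square associated to each $\ha\colon G/H\to G/K$ translates, under the homotopy-theoretic identification of cohomology with homotopy classes of sections, into the cochain-level equivariance imposed by $\MM(\ha)$. This requires the Gitler bijection to be natural not merely in the domain but in the coefficient system and the base space simultaneously, so that the geometric map $\Phi X(\ha)^*$ and the coefficient map $\MM(\ha)$ are correctly intertwined. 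Granting this naturality, the equivalence of the two notions of compatibility yields a bijection between compatible families on each side, hence $H^n_G(X;\theta^*\MM)\cong[X,L_\upi(M,n)]_{K_G(\upi,1)}$; non-emptiness of the target, needed to anchor the over-category homotopy set, is guaranteed by the section of the $G$-fibration \eqref{eqfib}.
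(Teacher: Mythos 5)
First, a remark on the comparison you were asked for: the paper does not prove Theorem \ref{thmol} at all. It is quoted from M{\o}ller \cite{m} (compare \cite{ms}), so there is no internal argument to measure your proposal against; it has to stand on its own. On its own terms it contains a genuine gap, and the same gap occurs twice, once on each side of the asserted bijection.

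You assert that an element of $H^n_G(X;\theta^*\MM)$ ``is exactly a family of classes in $H^n(X^H;\theta^*\MM(G/H))$ compatible under all structure maps,'' and, dually, that homotopy classes of maps of $\OG$-diagrams over $K(\upi(-),1)$ are computed level-wise subject to naturality, with ``no $\varprojlim^1$ obstruction.'' Neither claim holds. The complex $C^*_G(X;\MM)$ is an end (a limit) of the level-wise cochain complexes, and cohomology does not commute with limits: the natural map $H^n\bigl(\varprojlim_{\OG} C^*\bigr)\to\varprojlim_{\OG}H^n(C^*)$ is in general neither injective nor surjective, the deviation being measured by the higher derived limits $\varprojlim^s$ over the orbit category (not just a $\varprojlim^1$ over a tower). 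Symmetrically, for a diagram $\Phi X$ the set $[X,L_{\upi}(M,n)]_{K_G(\upi,1)}$ maps to the inverse limit of the level-wise homotopy classes $[X^H,L_{\upi(G/H)}(M(G/H),n)]_{K(\upi(G/H),1)}$, but this map is controlled by a Bousfield--Kan type obstruction theory; cofibrancy of $\Phi X$ ensures that the strict mapping space computes the derived one, not that $\pi_0$ of a homotopy limit equals the limit of the $\pi_0$'s. So your strategy only identifies the $\varprojlim^0$ contributions on the two sides and says nothing about the higher ones, which need not vanish for a general discrete $G$ and a general coefficient system. A correct argument must compare the two sides \emph{before} passing to cohomology and to homotopy classes --- for instance by equivariant obstruction theory against the Moore--Postnikov decomposition of the sectioned $G$-fibration \ref{eqfib}, which is essentially the route of \cite{m} and \cite{ms}, or, in the spirit of the present paper, by first establishing an equivalence of mapping spaces as in Theorem \ref{thogcrsadjoint} and Eq.~\ref{eqadjoint} and only then applying $\pi_0$. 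Your closing point about intertwining $\Phi X(\ha)^*$ with $\MM(\ha)$ is a real issue too, but it is downstream of this more basic one.
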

(Compare \cite{ms})

\end{mysubsection}

\begin{mysubsection}{Suspension isomorphism for Bredon cohomology with local coefficients} 
We extend Theorem \ref{thmol} to the case $n=0$ using the suspension isomorphism. This requires a notion of suspension in the category of objects over $K_G(\upi,1)$. 

A basepoint of an object in this category is a section of the map to $K_G(\upi,1)$ and the addition of a disjoint basepoint involves taking a disjoint union with $K_G(\upi,1)$.  Keeping this in mind, we make the following definition. Fix a base point $\ast\in S^1.$
\begin{defn}\label{susp}
Suppose $X$ is a $G$-space over the $G$-space $K_G(\upi,1)$ with $p:X\rightarrow K_G(\upi,1)$ . Define 
$$S_{K_G(\upi,1)}(X):= (X\times S^1) \sqcup K_G(\upi,1)/(x,\ast)\sim p(x).$$
This a $G$-space over $K_G(\upi,1)$, with $G$-map $$ S(p)\colon S_{K_G(\upi,1)}(X)\rightarrow K_G(\upi,1).$$ The map $S(p)$ on the factor $X\times S^1$ is defined to be the projection onto $X$ composed with $p$ and on $K_G(\upi,1)$ is the identity. In the following section we shall observe (Remark \ref{susprem}) that the above definition is a unpointed version of the corresponding definition for ex-spaces. 

Note that $S_{K_G(\upi,1)}(X)$ can be written as the following pushout: 
\begin{mydiagram}[\label{diagpushout}]
{ X \sqcup (S^1\times K_G(\upi,1)) \ar[rr]^{\iota\sqcup \id} \ar[d]_{p\sqcup \pr_2} &&  (X\times S^1) \sqcup (S^1\times K_G(\upi,1))\ar[d] \\
 K_G(\upi,1) \ar[rr] && S_{K_G(\upi,1)}(X)}
\end{mydiagram}

\noindent where $\iota\colon X\hookrightarrow X\times S^1$ is the inclusion $\iota(x)=(\iota,\ast)$ and $\pr_2\colon S^1\times K_G(\upi,1)\to  K_G(\upi,1)$ is the projection on the second factor.
\end{defn}

\begin{prop}\label{susiso}
Let $\MM$ be an equivariant local coefficient system on $K_G(\upi,1)$, given by a $\upi$-module $(M,\uphi)$.
Then
$$H^n_G(S_{K_G(\upi,1)}(X);S(p)^*\MM)\cong H^{n-1}_G(X;p^*\MM)\oplus H^n_G(K_G(\upi,1);\MM).$$
\end{prop}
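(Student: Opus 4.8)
The plan is to exhibit $S_{K_G(\upi,1)}(X)$ as the total space of a pair that retracts onto a copy of the base, and then to reduce the whole statement, level by level over the orbit category, to the classical suspension isomorphism for singular cohomology with local coefficients. Throughout write $K=K_G(\upi,1)$ for brevity, and recall that $H^*_G(-;\MM)$ is the cohomology of the array cochain complex $C^*_G(-;\MM)$ assembled from the fixed-point complexes $C^*(X^H;\MM(G/H))$.

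First I would record two formal facts about the construction of Definition \ref{susp}. Since $S^1$ carries the trivial $G$-action and taking $H$-fixed points commutes with products and with the quotient defining $S_K(-)$, there is a homeomorphism $\bigl(S_K(X)\bigr)^H\cong S_{K^H}(X^H)$ over $K^H=K(\upi(G/H),1)$, natural in $G/H\in\OG$, i.e. compatible with the structure maps $\ha$. This is what lets the argument descend to each fixed-point level while respecting the array condition cutting out $C^*_G(-;\MM)$. Second, the bottom map $i\colon K\to S_K(X)$ of \ref{diagpushout} and the projection $S(p)$ satisfy $S(p)\circ i=\id_K$, and $S(p)^*\MM$ restricts to $\MM$ along $i$; thus $K$ is an equivariant retract of $S_K(X)$.

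Next I would run the long exact sequence of the pair $(S_K(X),K)$. At each fixed-point level this is the usual long exact sequence for local coefficients, and the retraction $S(p)$ splits the restriction map; as both the sequence and the splitting are natural in $G/H$, they assemble over $\OG$ to a split short exact sequence of array complexes, giving
$$H^n_G(S_K(X);S(p)^*\MM)\cong H^n_G(K;\MM)\ \oplus\ H^n_G(S_K(X),K;S(p)^*\MM),$$
where the first summand $H^n_G(K;\MM)$ is the term $H^n(K_G(\upi,1);\MM)$ of the statement. It remains to identify the relative group with $H^{n-1}_G(X;p^*\MM)$. Directly from Definition \ref{susp}, $S_K(X)$ is the pushout of $p\colon X\to K$ along the cofibration $\iota\colon X=X\times\{\ast\}\hookrightarrow X\times S^1$, so excision for this square (again level-wise and $\OG$-compatibly) yields
$$H^n_G(S_K(X),K;S(p)^*\MM)\cong H^n_G\bigl(X\times S^1,\,X\times\{\ast\};\,\pr_X^{\,*}p^*\MM\bigr).$$
Finally, because $\pr_X^{\,*}p^*\MM$ is pulled back along $\pr_X\colon X\times S^1\to X$ and hence constant in the $S^1$-direction, the classical suspension (relative Künneth) isomorphism, crossing with the fundamental class of $(S^1,\ast)$, gives $H^n_G(X\times S^1,X\times\{\ast\};\pr_X^{\,*}p^*\MM)\cong H^{n-1}_G(X;p^*\MM)$. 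Combining the three displays proves the proposition.

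The main obstacle is not any single non-equivariant ingredient — each of the three steps (long exact sequence of a pair, excision for the pushout, suspension isomorphism) is standard with local coefficients — but the bookkeeping that makes them equivariant: one must verify that the fixed-point identification, the splitting, the excision, and the suspension isomorphism are all natural with respect to the orbit-category maps $\ha$, so that they restrict to the subgroups of compatible arrays defining $C^*_G(-;\MM)$. Once that naturality is in place the assembled sequences are exact and split, and the result follows.
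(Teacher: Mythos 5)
Your argument is correct, but it decomposes the problem differently from the paper. The paper covers $S_{K_G(\upi,1)}(X)$ by the two pieces $S^U(X)$ and $S^V(X)$ coming from a cover $S^1=U\cup V$, observes that each piece deformation retracts to $K_G(\upi,1)$ while their intersection is equivalent to $X\sqcup K_G(\upi,1)$, and reads the answer off a single Mayer--Vietoris sequence (whose existence is the only exactness input, attributed to the methods of Mukherjee--Mukherjee); the splitting there is forced because the restriction map in that sequence contains an identity summand. You instead split off the base first, via the retraction $S(p)$ and the long exact sequence of the pair $(S_{K_G(\upi,1)}(X),K_G(\upi,1))$, then use excision along the pushout of Definition \ref{susp} to pass to the honest product pair $(X\times S^1,X\times\{\ast\})$, and finish with the classical suspension isomorphism for coefficients pulled back from $X$. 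Both routes rest on the same foundational point, namely that the exactness properties of the array complexes $C^*_G(-;\MM)$ hold and are natural in $G/H\in\OG$; yours needs three such properties (pair sequence, excision, K\"unneth with one factor having free cohomology) where the paper needs only Mayer--Vietoris, but in exchange your argument makes the geometric source of the degree shift explicit and identifies the relative group $H^n_G(S_{K_G(\upi,1)}(X),K_G(\upi,1);S(p)^*\MM)$ as a genuine suspension term, which is conceptually closer to the fibrewise-spectrum picture developed later in the paper. Note also that your K\"unneth step, if unwound, is itself most easily proved by a Mayer--Vietoris argument on $X\times S^1$, so the two proofs are close cousins rather than genuinely independent.
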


\begin{proof}
We use the Mayer Vietoris sequence which asserts : For a $G$-CW complex $X$ with $p:X\rightarrow K_G(\upi,1)$ and $X=A\cup B$ with $A$ and $B$ $G$-subcomplexes, there is a long exact sequence
\begin{equation*}
\begin{split}
\cdots\rightarrow H^n_G(X;p^*\MM)\rightarrow  H^n_G(A;p^*\MM)\oplus H^n_G(B;p^*\MM) \rightarrow H^n_G(A\cap B;p^*\MM)\\ \rightarrow H^{n+1}_G(X;p^*\MM)\rightarrow\cdots
\end{split}
\end{equation*}
The existence of this sequence follows from the methods of \cite{mm}. 

Write $S^1=U\cup V$ with $U,V$ open, $*\in U\cap V$ and $U\cap V \simeq S^0$. This induces $S_{K_G(\upi,1)}(X)= S^U(X)\cup S^V(X)$ where 
$$S^U(X)= (X\times U) \sqcup K_G(\upi,1)/(x,\ast)\sim p(x),$$
$$S^V(X)=(X\times V) \sqcup K_G(\upi,1)/(x,\ast)\sim p(x).$$
Then both $S^U(X)$ and $S^V(X)$ are equivariantly homotopic to the mapping cylinder of $p$ and therefore deformation retract to $K_G(\upi,1)$. The space $S^U(X)\cap S^V(X)$ is equivariantly homotopic to the disjoint union $X\sqcup K_G(\upi,1)$. The map 
$$  H^n_G(S^U(X);S(p)^*\MM)\oplus H^n_G(S^V(X);S(p)^*\MM)\rightarrow H^n_G(S^U(X)\cap S^V(X);S(p)^*\MM)$$
is the same as 
$$  H^n_G(K_G(\upi,1);\MM)\oplus H^n_G(K_G(\upi,1);\MM)\stackrel{(p^*,\id) \oplus (p^*,\id)}{\longrightarrow} H^n_G(X;p^*\MM)\oplus H^n_G(K_G(\upi,1);\MM).$$
Hence the result follows from the Mayer Vietoris sequence.
\end{proof}

The adjoint of the suspension is a loop functor which we define as follows. 

\begin{defn}\label{fibreloop1}
Suppose $Y$ is a $G$-space and $p:Y\rightarrow K_G(\upi,1)$ with a section $s:K_G(\upi,1)\rightarrow Y$ so that $p\circ s = \id$. Define 
$$\Omega_{K_G(\upi,1)}(Y):= \{ f\in \Maps(S^1, Y)|~ \exists~ b \in K_G(\upi,1) ~\mbox{so that}~ p(f(t))=b,~f(*)=s(b)\},$$
where $*$ is the basepoint of $S^1$. The map $\Omega(p):\Omega_{K_G(\upi,1)}(Y) \rightarrow K_G(\upi,1)$ is given by  $\Omega(p)(f)= p(f(*))$ and $\Omega(s)\colon K_G(\upi,1) \rightarrow \Omega_{K_G(\upi,1)}(Y) $ is given by $\Omega(s)(b)=\mbox{constant at}~s(b)$.  
\end{defn}

\begin{remark}\label{looprep}
Note that if $p: Y\rightarrow K_G(\upi,1)$ is a Serre fibration with fibre $F$, then $\Omega(p) : \Omega_{K_G(\upi,1)}(Y)\rightarrow K_G(\upi,1)$ is a Serre fibration with fibre $\Omega F$. We apply to the fibration \ref{eqfib}. We obtain 
$$\Omega_{K_G(\upi,1)}(L_{\upi}(M,n))\simeq_G L_{\upi}(M,n-1)$$
as sectioned spaces over $K_G(\upi,1)$.   
\end{remark}

For the next proposition, recall that the mapping space $\Maps_{K_G(\upi,1)}(X,Y)$ of objects in the over category is the pullback $*\rightarrow \Maps(X,K_G(\upi,1))\leftarrow \Maps(X,Y)$. As a set this equals to
$\{ f:X\rightarrow Y | ~ p_Y\circ f = p_X\}.$

\begin{prop}\label{adj}
Suppose $X$ is a $G$-space with $p_X:X\rightarrow K_G(\upi,1)$ and $Y$ is a sectioned $G$-space with $p_Y: Y\rightarrow K_G(\upi,1)$ and $s_Y:  K_G(\upi,1) \rightarrow Y$. Then
$$\Maps_{K_G(\upi,1)}(S_{K_G(\upi,1)}(X), Y) \cong \Maps_{K_G(\upi,1)}( K_G(\upi,1),Y) \times \Maps_{K_G(\upi,1)}(X,\Omega_{K_G(\upi,1)}(Y))$$
\end{prop}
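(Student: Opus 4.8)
The plan is to apply the contravariant functor $\Maps_{B}(-,Y)$, where we abbreviate $B=K_G(\upi,1)$, to the pushout square \ref{diagpushout} and to use that fibrewise mapping spaces convert colimits into limits. Since $S_{B}(X)$ is exhibited there as a pushout of spaces over $B$, applying $\Maps_{B}(-,Y)$ yields a pullback square whose remaining three corners I would compute directly. The structure map of $S^1\times B$ to $B$ is $\pr_2$, so a map over $B$ out of $S^1\times B$ that is compatible with the left vertical arrow $p\sqcup\pr_2$ is forced to be constant along $S^1$; hence that part of the data is completely determined by its restriction $\alpha\in\Maps_{B}(B,Y)$, i.e.\ by a section of $p_Y$. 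The gluing condition coming from $\iota\sqcup\id$ is $\beta(x,\ast)=\alpha(p(x))$, so the pullback reduces to
\[
\Maps_{B}(S_{B}(X),Y)\;\cong\;\bigl\{(\alpha,\beta):\alpha\in\Maps_{B}(B,Y),\ \beta\in\Maps_{B}(X\times S^1,Y),\ \beta(x,\ast)=\alpha(p(x))\bigr\}.
\]

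Next I would recognise the constraint $\beta(x,\ast)=\alpha(p(x))$ as saying that $(\alpha,\beta)$ lies in the pullback of
\[
\Maps_{B}(B,Y)\xrightarrow{\ (-)\circ p\ }\Maps_{B}(X,Y)\xleftarrow{\ \operatorname{ev}_{\ast}\ }\Maps_{B}(X\times S^1,Y),
\]
where $\operatorname{ev}_{\ast}$ restricts a fibrewise family of loops to the basepoint $\ast\in S^1$. By the exponential law a map $X\times S^1\to Y$ over $B$ amounts to a family, indexed by $x\in X$, of loops $S^1\to Y$ contained in the single fibre over $p_X(x)$, and the constraint says each such loop is based at $\alpha(p(x))$. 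In the special case where the prescribed basepoint family is $s_Y\circ p_X$, Definition \ref{fibreloop1} identifies exactly this data with $\Maps_{B}(X,\Omega_{B}(Y))$; this pins down the intended second factor.

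The remaining, and main, step is to upgrade this pullback to the asserted product $\Maps_{B}(B,Y)\times\Maps_{B}(X,\Omega_{B}(Y))$. The obstruction is a basepoint shift: the loops recorded by $\beta$ are based at the arbitrary section $\alpha$, whereas $\Omega_{B}(Y)$ consists of loops based at the distinguished section $s_Y$. The plan is to use $s_Y$ to trivialise the evaluation fibration $\operatorname{ev}_{\ast}$, so that fibrewise translation by $s_Y$ carries the fibre over $\alpha\circ p$ homeomorphically onto the fibre $\Maps_{B}(X,\Omega_{B}(Y))$ over $s_Y\circ p_X$, compatibly as $\alpha$ varies; this identification together with the projection $(\alpha,\beta)\mapsto\alpha$ furnishes the splitting. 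Concretely $(\alpha,\beta)\mapsto(\alpha,\gamma)$, where $\gamma$ is obtained from $\beta$ by rebasing each loop at $s_Y$, with inverse reinserting the basepoint $\alpha$. I expect the verification that this is a well-defined bijection, natural in $X$ and $Y$, to be the technical heart of the argument, since it is precisely here that one must invoke the section $s_Y$ together with the fibrewise grouplike structure carried by the spaces $Y=L_{\upi}(M,n)$ to which the proposition is ultimately applied.
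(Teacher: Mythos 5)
Your formal reduction coincides with the paper's: both apply $\Maps_{K_G(\upi,1)}(-,Y)$ to the pushout \ref{diagpushout}, observe that the $S^1\times K_G(\upi,1)$ component of a compatible family is forced to equal $\alpha\circ\pr_2$ and so carries no data beyond the section $\alpha$, and thereby identify $\Maps_{K_G(\upi,1)}(S_{K_G(\upi,1)}(X),Y)$ with the pullback of
$$\Maps_{K_G(\upi,1)}(K_G(\upi,1),Y)\xrightarrow{\,-\circ p_X\,}\Maps_{K_G(\upi,1)}(X,Y)\xleftarrow{\,\mathrm{ev}_\ast\,}\Maps_{K_G(\upi,1)}(X\times S^1,Y),$$
whose fibre over the point $s_Y\circ p_X$ is exactly $\Maps_{K_G(\upi,1)}(X,\Omega_{K_G(\upi,1)}(Y))$. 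Up to this point you and the paper agree in every detail.

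The step you flag as the ``technical heart'' --- promoting this pullback to the product of its base with the one fibre over $s_Y\circ p_X$ --- is precisely the step the paper does not carry out either: after noting that $s_Y$ maps to $s_Y\circ p_X$, the printed proof simply asserts (``It follows that\,\ldots'') the product decomposition, with no rebasing construction. Your suspicion that extra structure on $Y$ is required is correct, and the point is not pedantic: with only the stated hypotheses the proposition fails. Take $G$ and $\upi$ trivial, so $K_G(\upi,1)=\ast$, take $X=\ast$ and $Y=K(Q,1)$ for a finite nonabelian group $Q$, sectioned by its basepoint; then the left-hand side is the free loop space of $Y$, with $\pi_0$ the set of conjugacy classes of $Q$, while the right-hand side has $\pi_0\cong Q$, so there is no equivalence compatible with the $\pi_0$-level use made in Corollary \ref{thmol0}. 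A correct proof must therefore invoke, as you propose, the fibrewise grouplike structure of $Y$ --- available in every application since $L_{\upi}(M,n)\simeq_G\Omega_{K_G(\upi,1)}L_{\upi}(M,n+1)$ by Remark \ref{looprep} --- and use fibrewise translation to trivialize $\mathrm{ev}_\ast$ over the space of sections. What is still missing from your write-up is the actual construction of that rebasing map and the check that it is an equivalence natural in $\alpha$ (one should expect only a weak equivalence rather than a homeomorphism, since the fibrewise multiplication is associative only up to homotopy, but that suffices for the corollary). In short: you reproduce the paper's argument where it is complete, correctly locate the gap that the paper leaves open, and identify the right ingredient to close it, but you have not yet closed it.
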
 

\begin{proof}
Applying the functor $\Maps_{K_G(\upi,1)}(-,Y)$ to the pushout diagram \ref{diagpushout}, we have a pullback square
$$\xymatrix{\Maps_{K_G(\upi,1)}(S_{K_G(\upi,1)}(X), Y) \ar[r] \ar[d] & \Maps_{K_G(\upi,1)}( (X\times S^1) \sqcup (S^1\times K_G(\upi,1)),Y)  \ar[d] \\ 
\Maps_{K_G(\upi,1)}(K_G(\upi,1),Y)  \ar[r] &  \Maps_{K_G(\upi,1)}(X \sqcup (S^1\times K_G(\upi,1)), Y)}$$
The disjoint unions yield products on the mapping spaces 
\begin{equation*}
\begin{split}
& \Maps_{K_G(\upi,1)}( (X\times S^1) \sqcup (S^1\times K_G(\upi,1)),Y) \\
& \cong \Maps_{K_G(\upi,1)}( X\times S^1,Y)\times \Maps_{K_G(\upi,1)}(  S^1\times K_G(\upi,1),Y),\\
& \mbox{and}\\
& \Maps_{K_G(\upi,1)}(X \sqcup (S^1\times K_G(\upi,1)), Y) \\
& \cong  \Maps_{K_G(\upi,1)}(X, Y)\times \Maps_{K_G(\upi,1)}( S^1\times K_G(\upi,1), Y).
\end{split}
\end{equation*}

The image of $s_Y$ under the map
$$\Maps_{K_G(\upi,1)}(K_G(\upi,1),Y)\rightarrow \Maps_{K_G(\upi,1)}(X, Y)$$
is the map $s_Y\circ p_X$.  It follows that $\Maps_{K_G(\upi,1)}(S_{K_G(\upi,1)}(X), Y)$ is the product of $\Fibre(\Maps_{K_G(\upi,1)}( X\times S^1,Y) \rightarrow \Maps_{K_G(\upi,1)}(X,Y))$ taken over the map $s_Y\circ p_X$ and the pullback $Z$ 
$$\xymatrix{Z \ar[r] \ar[d] & \Maps_{K_G(\upi,1)}( S^1\times K_G(\upi,1),Y)  \ar[d] \\ 
\Maps_{K_G(\upi,1)}(K_G(\upi,1),Y)  \ar[r] &  \Maps_{K_G(\upi,1)}(S^1\times K_G(\upi,1), Y)}$$
Hence  $Z$  equals $\Maps_{K_G(\upi,1)}( K_G(\upi,1),Y) $. Note that $$\Fibre (\Maps_{K_G(\upi,1)}( X\times S^1,Y) \rightarrow \Maps_{K_G(\upi,1)}(X,Y))$$ is equal to 
$$\{ f:S^1\times X \rightarrow Y | ~ f(\ast,x)=s_Y(p_X(x)),~p_Y(f(t,x))=p_X(x)\}.$$
These are precisely the elements of $\Maps_{K_G(\upi,1)}(X,\Omega_{K_G(\upi,1)}(Y))$. Therefore we get 
$$\Maps_{K_G(\upi,1)}(S_{K_G(\upi,1)}(X), Y)\cong \Maps_{K_G(\upi,1)}( K_G(\upi,1),Y) \times \Maps_{K_G(\upi,1)}(X,\Omega_{K_G(\upi,1)}(Y))$$
This completes the proof.
\end{proof}

From Proposition \ref{adj} and Remark \ref{looprep} we extend Theorem \ref{thmol} to the case $n=0$.

\begin{corollary}\label{thmol0}
With notations as in Theorem \ref{thmol}, we have
$$H^0_G(X;p^*\MM)\cong   [X,L_{\upi}(M,0)]_{K_G(\upi,1)}.$$
\end{corollary}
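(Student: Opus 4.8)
The plan is to reduce the case $n=0$ to the already-established case $n=1$ of Theorem \ref{thmol} by means of the suspension--loop adjunction. First I would apply Theorem \ref{thmol} with $n=1$ to the suspended space $S_{K_G(\upi,1)}(X)$, taking as target the space $L_{\upi}(M,1)$; this is legitimate because the $G$-fibration \ref{eqfib} is sectioned, so $L_{\upi}(M,1)$ is a sectioned $G$-space over $K_G(\upi,1)$ of the kind required by Proposition \ref{adj}. This step yields
$$H^1_G(S_{K_G(\upi,1)}(X); S(p)^*\MM) \cong [S_{K_G(\upi,1)}(X), L_{\upi}(M,1)]_{K_G(\upi,1)}.$$

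Next I would compute each side of this bijection independently. On the homotopy-theoretic side, Proposition \ref{adj} applied with $Y=L_{\upi}(M,1)$, together with the identification $\Omega_{K_G(\upi,1)}(L_{\upi}(M,1)) \simeq_G L_{\upi}(M,0)$ supplied by Remark \ref{looprep}, gives after passing to homotopy classes
$$[S_{K_G(\upi,1)}(X), L_{\upi}(M,1)]_{K_G(\upi,1)} \cong [K_G(\upi,1), L_{\upi}(M,1)]_{K_G(\upi,1)} \times [X, L_{\upi}(M,0)]_{K_G(\upi,1)}.$$
On the cohomological side, the suspension isomorphism of Proposition \ref{susiso} with $n=1$ gives
$$H^1_G(S_{K_G(\upi,1)}(X); S(p)^*\MM) \cong H^0_G(X; p^*\MM) \oplus H^1_G(K_G(\upi,1); \MM),$$
and Theorem \ref{thmol} applied to the identity map $K_G(\upi,1)\to K_G(\upi,1)$ with $n=1$ identifies $H^1_G(K_G(\upi,1); \MM) \cong [K_G(\upi,1), L_{\upi}(M,1)]_{K_G(\upi,1)}$.

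Comparing the two descriptions of $[S_{K_G(\upi,1)}(X), L_{\upi}(M,1)]_{K_G(\upi,1)}$, each one carries a distinguished summand (respectively, factor) isomorphic to $H^1_G(K_G(\upi,1); \MM)$, while the complementary piece is $H^0_G(X; p^*\MM)$ in the first description and $[X, L_{\upi}(M,0)]_{K_G(\upi,1)}$ in the second. Cancelling the common factor then delivers the asserted bijection $H^0_G(X;p^*\MM)\cong [X,L_{\upi}(M,0)]_{K_G(\upi,1)}$; note also that the target is non-empty since $L_{\upi}(M,0)$ inherits a section from $\Omega(s)$ in Definition \ref{fibreloop1}.

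The main obstacle is precisely this cancellation: a priori the two splittings arise by different mechanisms---one from the Mayer--Vietoris sequence underlying Proposition \ref{susiso}, the other from the pushout of Definition \ref{susp} via Proposition \ref{adj}---so I must verify that they are compatible. The key point is that in both descriptions the distinguished $H^1_G(K_G(\upi,1);\MM)$-piece is the contribution of the base copy of $K_G(\upi,1)$ inside $S_{K_G(\upi,1)}(X)$ (the locus $(x,\ast)\sim p(x)$), equivalently the image of the section $s\colon K_G(\upi,1)\to L_{\upi}(M,1)$. Establishing this requires tracking the naturality of all four isomorphisms with respect to the section $s$ and the projection $S(p)$, so that the distinguished factors correspond under the composite of the bijections; once this coherence is checked, the complementary reduced summands are forced to agree and the cancellation is justified.
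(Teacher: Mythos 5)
Your proposal follows essentially the same route as the paper's own proof: apply Theorem \ref{thmol} with $n=1$ to $S_{K_G(\upi,1)}(X)$, decompose the cohomological side via Proposition \ref{susiso} and the mapping side via Proposition \ref{adj} together with Remark \ref{looprep}, and cancel the common $H^1_G(K_G(\upi,1);\MM)$ factor. Your closing paragraph on verifying that the two splittings are compatible before cancelling is a point the paper's proof passes over silently, and is a worthwhile addition rather than a deviation.
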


\begin{proof}
From Theorem \ref{thmol} we have 
$$H^1_G(X;p^*\MM)\cong   [X,L_{\upi}(M,1)]_{K_G(\upi,1)}.$$
We use this for $S_{K_G(\upi,1)}(X)$ to get 
$$H^1_G(S_{K_G(\upi,1)}(X);S(p)^*\MM)\cong   [S_{K_G(\upi,1)}(X),L_{\upi}(M,1)]_{K_G(\upi,1)}.$$
From Proposition \ref{susiso} the LHS is 
$$H^0_G(X;p^*\MM)\oplus H^1_G(K_G(\upi,1);\MM).$$
By Proposition \ref{adj} the RHS is 
$$[X,\Omega_{K_G(\upi,1)}(L_{\upi}(M,1))]_{K_G(\upi,1)}\oplus [K_G(\upi,1),L_{\upi}(M,1))]_ {K_G(\upi,1)}.$$
Using Remark \ref{looprep} this simplifies to 
$$[X,L_{\upi}(M,0)]_{K_G(\upi,1)}\oplus [K_G(\upi,1),L_{\upi}(M,1))]_ {K_G(\upi,1)}$$
which is isomorphic to $ [X,L_{\upi}(M,0)]_{K_G(\upi,1)} \oplus H^1_G(K_G(\upi,1);\MM).$ Note that in Proposition \ref{susiso} the summand $H^1_G(K_G(\upi,1);\MM)$ is isomorphic to cohomology of the inclusion $K_G(\upi,1)\to S_{K_G(\upi,1)}X$. The second summand in Proposition \ref{adj} also restricts to an isomorphism on this factor. Therefore the isomorphism 
$$H^0_G(X;p^*\MM)\oplus H^1_G(K_G(\upi,1);\MM) \cong    [X,L_{\upi}(M,0)]_{K_G(\upi,1)}\oplus [K_G(\upi,1),L_{\upi}(M,1))]_ {K_G(\upi,1)}$$
restricts to an isomorphism of the second factors. It follows that 
$$[X,L_{\upi}(M,0)]_{K_G(\upi,1)}\cong H^0_G(X;p^*\MM).$$
\end{proof}
\end{mysubsection}

\begin{mysubsection}{Representation using equivariant crossed complexes}
 If $X$ is a $G$-CW complex then fixed point sets $X^H,~H\leq G,$ are CW complexes (see \cite{dieck}). We have an $\OG$-crossed complex  $$\Pi_G(X)(G/H):=\Pi(X^H).$$
 
 For $T\in \OG$-$\Crs$, define a functorial $G$-space $$\BB^G T:= \Psi \BB T.$$ This is called the \emph{equivariant classifying space} of $T$. Here $\Psi\colon \OG\mbox{-}\Top\to G\mbox{-}\Top$ is the `coalescence functor' of Elmendorf \cite{elm}. It is right adjoint to $\Phi$ (cf. \ref{eqphi}).  
 Using the simplicial enrichment of crossed complexes, the following result is proved in \cite[Theorem 6.2]{rbrown} \cite[Theorem 4.1]{rbrown2}.
 \begin{thm}\label{thogcrsadjoint}
  For a $G$-CW complex $X$ and $T\in\OG\mbox{-}\Crs$, there is a natural homotopy equivalence $$\Maps_G(X,\BB^G T)\simeq \Coh\underline{\Crs}(\Pi_G(X),T)$$
  of Kan complexes, where $\Coh\underline{\Crs}(\Pi_G(X),T)$ denote the simplicial set of homotopy coherent transformations from the diagram $\Pi_G(X)$ to $T$ and left side is the simplicial mapping space in the category $ G\mbox{-}\CW/K_G(\upi,1)$.
 \end{thm}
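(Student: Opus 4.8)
The plan is to reduce the equivariant statement to the non-equivariant adjunction of Theorem \ref{thadjoint}, applied one orbit at a time, and then to reassemble the resulting levelwise equivalences using Elmendorf's adjunction $\Phi\dashv\Psi$ together with the homotopy invariance of homotopy coherent transformations over $\OG$.

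First I would record the non-equivariant input. Since $X$ is a $G$-CW complex, each fixed-point set $X^H$ is a CW complex, so Theorem \ref{thadjoint} supplies a weak homotopy equivalence
$$\eta_{G/H}\colon \BB(\CRS(\Pi(X^H),T(G/H)))\xrightarrow{\ \simeq\ }\Maps_{\Top}(X^H,\BB(T(G/H))),$$
natural in cellular maps of the source and in morphisms of crossed complexes in the target. Because $\Pi_G(X)(G/H)=\Pi(X^H)$, and the structure maps of the $\OG$-crossed complex $T$ together with the restriction maps $X^K\to X^H$ are precisely of these two types, the family $\{\eta_{G/H}\}$ is compatible across $\OG$; that is, it supplies a levelwise weak equivalence between the two simplicially enriched diagrams whose homotopy coherent transformation objects will be the two sides of the theorem. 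Here I use that all internal homs in $\Crs$ are Kan complexes, as recalled above.

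Second, I would identify each side of the asserted equivalence with such a homotopy coherent transformation object. On the topological side, $\BB^G T=\Psi\BB T$, and Elmendorf's adjunction $\Phi\dashv\Psi$, applied to the cofibrant $G$-CW complex $X$, gives
$$\Maps_G(X,\BB^G T)=\Maps_G(X,\Psi\BB T)\simeq \Coh(\Phi X,\BB T),$$
the homotopy coherent transformations of $\OG$-spaces from $\Phi X$ to $\BB T$; this is the sense in which the coalescence functor computes the derived mapping space of $\OG$-diagrams. On the crossed complex side, $\Coh\underline{\Crs}(\Pi_G(X),T)$ is by construction the homotopy coherent transformation simplicial set assembled from the enrichment objects $\CRS(\Pi(X^H),T(G/H))$, so applying $\BB$ (equivalently $N^{\Delta}$) turns its constituent pieces into exactly the simplicial sets entering $\Coh(\Phi X,\BB T)$. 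Consequently, since the homotopy coherent transformation construction is built from homotopy limits over $\OG$ and is therefore homotopy invariant for levelwise equivalences between Kan-valued data, the natural family $\{\eta_{G/H}\}$ of the first step induces
$$\Coh\underline{\Crs}(\Pi_G(X),T)\simeq \Coh(\Phi X,\BB T)\simeq \Maps_G(X,\BB^G T),$$
and naturality in $X$ and $T$ follows by tracking naturality through each stage. This is the content of the simplicial-enrichment results of \cite{rbrown,rbrown2}.

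The main obstacle will be the second step: matching the two homotopy coherent transformation constructions precisely. Concretely, one must check that the classifying space functor $\BB$ commutes, up to the required natural weak equivalence, with the $W$-bar resolution of $\OG$ used to form homotopy coherent transformations, and that the equivalences $\eta_{G/H}$ carry genuine coherence data rather than mere commutativity up to homotopy. Once the homotopy invariance and $\BB$-compatibility of the $\Coh$ construction are in hand, the remaining bookkeeping is routine.
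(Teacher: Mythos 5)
The paper does not actually prove Theorem \ref{thogcrsadjoint}: it quotes it from \cite{rbrown,rbrown2}, where it is established using the simplicial enrichment of $\Crs$ and the Cordier--Porter homotopy coherence machinery \cite{cp}. Your outline --- apply the non-equivariant Theorem \ref{thadjoint} levelwise over $\OG$, then reassemble via Elmendorf's adjunction $\Phi\dashv\Psi$ and the homotopy invariance of the $\Coh$ construction for levelwise equivalences of Kan-valued diagrams --- is essentially the strategy of those references, so there is no divergence of approach to report. The two points you yourself flag as the main obstacle (identifying $\Maps_G(X,\Psi\BB T)$ with the homotopy coherent mapping space $\Coh(\Phi X,\BB T)$, which uses that $\Phi X$ is a free, hence cofibrant, $\OG$-diagram for a $G$-CW complex $X$, and checking that $\BB=\realz{N^{\Delta}(-)}$ is compatible with the bar resolution of $\OG$ defining $\Coh$) are precisely the content supplied by \cite{rbrown,rbrown2}; they would have to be carried out in full for your sketch to become a self-contained proof, but as a reconstruction of the cited argument it is accurate.
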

\noindent See \cite[Definition 3.1]{cp} for homotopy coherent transformations.

Let $\upi$ be an $\OG$-group and $\MM=(M,\uphi)$ be an $\upi$-module. We have the following definition.
\begin{defn}
The $\OG$-crossed complexes $\chi_G(\upi,n)$ and $\uc{n}$, defined by $$\chi_G(\upi,n)(G/H):=\chi(\upi(G/H),n);~~\uc{n}(G/H):=\uch{n},$$ for each object $G/H$ of $\OG$ and for a morphism $\ha\colon G/H\rightarrow G/K$, the maps $$M(\ha)_\ast\colon \chi(\upi(G/K),n)\rightarrow \chi(\upi(G/H),n),~\upi(\ha)_\ast\colon \chi(\upi(G/K),1)\rightarrow \chi(\upi(G/H),1)$$ are naturally induced. Also, we have a map in $\OG$-$\Crs$ 
\begin{myeq}\label{eqcrsfib}
 p\colon \uc{n}\to \chi_G(\upi,n),
\end{myeq}
where $p(G/H)\colon \uch{n}\to \chi(\upi(G/H),n)$ is as defined in Section \ref{ucs}.
\end{defn}
Note that 
$$\BB^G \uc{n} \simeq L_{\upi}(M,n),~~ \BB^G \chi_G(\upi,n)\simeq K_G(\upi, n).$$
\noindent 

\begin{prop}\label{padjoint}
 Applying the functor $\BB^G$ to Eq.\ref{eqcrsfib} yields the classifying $G$-fibration, given in Equation \ref{eqfib}, of Bredon cohomology with local coefficients.
\end{prop}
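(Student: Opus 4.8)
The plan is to exploit the factorization $\BB^G=\Psi\circ\BB$, where $\BB$ is applied objectwise to the $\OG$-crossed complex, and to reduce the entire statement to the non-equivariant Gitler fibration together with Elmendorf's identification of fixed points. First I would examine the map $\BB p$ of $\OG$-spaces obtained by applying the classifying space functor objectwise to \ref{eqcrsfib}. For each object $G/H$ of $\OG$, the component $p(G/H)\colon \uch{n}\to\chi(\upi(G/H),1)$ is exactly the non-equivariant fibration of crossed complexes recalled in Section \ref{ucs}, so its classifying space is the Gitler fibration
$$L_{\upi(G/H)}(M(G/H),n)\xrightarrow{\;(\BB p)(G/H)\;} K(\upi(G/H),1)$$
with fibre $\BB\chi(M(G/H),n)=K(M(G/H),n)$ (cf. \cite{git} and Theorem \ref{thbprop}). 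Thus $\BB p$ is an objectwise Serre fibration of $\OG$-spaces, and objectwise it realizes the $\upi(G/H)$-action of $\pi_1 K(\upi(G/H),1)$ on $\pi_n K(M(G/H),n)$ given by $\uphi$.

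Next I would apply the coalescence functor $\Psi$. Since $\Psi$ is right adjoint to $\Phi$ and realizes Elmendorf's equivalence \cite{elm}, for a suitably fibrant $\OG$-space $Y$ the counit induces a natural weak equivalence $(\Psi Y)^H\simeq Y(G/H)$. Applying this to the three objectwise spaces recovers the already-noted identifications $\BB^G\uc{n}\simeq L_{\upi}(M,n)$ and $\BB^G\chi_G(\upi,1)\simeq K_G(\upi,1)$, and shows that on $H$-fixed points the map $\BB^G p=\Psi(\BB p)$ corresponds to $(\BB p)(G/H)$. Because a $G$-map is a $G$-fibration precisely when each of its fixed-point maps is a Serre fibration, and each $(\BB p)(G/H)$ is such, it follows that $\BB^G p$ is the required $G$-fibration. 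Its fibre has $H$-fixed points $K(M(G/H),n)$ for every $H$, hence is $G$-homotopy equivalent to $K_G(M,n)$, and the naturality of the objectwise actions assembles to the $\upi$-action on $M$ realized by \ref{eqfib}.

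Finally, the section would be produced from the canonical inclusion $\chi_G(\upi,1)\hookrightarrow\uc{n}$ of $\OG$-crossed complexes — placing $\upi$ in dimension $1$ and the identity elsewhere — which splits $p$ objectwise and is natural in $G/H$; applying $\BB^G$ yields a $G$-section of $\BB^G p$. Together these identify $\BB^G p$ with the sectioned $G$-fibration \ref{eqfib}, up to the $G$-homotopy equivalence by which \ref{eqfib} is characterized.

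The step I expect to be the main obstacle is controlling the interaction of $\Psi$ with fibrations: one must be sure that $\Psi$ carries the objectwise fibration $\BB p$ to a map whose $H$-fixed points are, via the natural equivalence $(\Psi Y)^H\simeq Y(G/H)$, the non-equivariant Gitler fibrations $(\BB p)(G/H)$, so that the fixed-point criterion for $G$-fibrations applies and the fibre is computed correctly. This rests entirely on the homotopical properties of Elmendorf's construction — that the counit $\Phi\Psi\to\id$ is a natural weak equivalence on fibrant objects — rather than on any new computation, and the only care needed is to ensure the objectwise classifying spaces $\BB\uc{n}$ and $\BB\chi_G(\upi,1)$ are fibrant enough for this identification to hold.
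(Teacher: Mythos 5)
Your overall strategy matches the paper's: reduce to showing that $\BB^G(p)$ is a $G$-fibration by checking that each $H$-fixed-point map is a Serre fibration, using the objectwise identification with the non-equivariant fibrations $\BB(p(G/H))$. However, there is a genuine gap at exactly the step you flag as the main obstacle, and your proposed resolution of it does not work. You argue that since the counit of Elmendorf's adjunction gives a natural weak equivalence $(\Psi Y)^H \simeq Y(G/H)$, the fixed-point map $(\Psi(\BB p))^H$ ``corresponds to'' the Serre fibration $(\BB p)(G/H)$ and hence is itself a Serre fibration. But being a Serre fibration is not invariant under weak equivalence in the arrow category: a map weakly equivalent to a Serre fibration need not be one. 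The space $(\Psi Y)^H$ is not literally $Y(G/H)$ --- in the explicit model it is a two-sided bar construction $\diag B_\bullet(Y_\bullet,\OG,\iota^H)$ --- so the counit only gives you the homotopy type of the fixed points, not the lifting property of the fixed-point map. Appealing to ``the homotopical properties of Elmendorf's construction'' therefore cannot close this step; a genuinely new verification is required.

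The paper supplies exactly this missing ingredient: it works with the explicit simplicial model $\Psi^\Delta(U)=\diag B_\bullet(U_\bullet,\OG,\iota)$, observes that $\BB^G(p)^H$ is the geometric realization of the induced map $\diag B_\bullet((N^\Delta\circ\uc{n})_\bullet,\OG,\iota^H)\to\diag B_\bullet((N^\Delta\circ\chi_G(\upi,1))_\bullet,\OG,\iota^H)$, and proves (Lemma \ref{lem1}) that the diagonal of the two-sided bar construction carries a levelwise Kan fibration to a Kan fibration. Combined with the fact that each $N^\Delta(p)(G/H)$ is a Kan fibration (which, as in Proposition \ref{pexsp}, follows from $p(G/H)$ being a fibration of crossed complexes), this shows $(\BB^G p)^H$ is the realization of a Kan fibration, hence an honest Serre fibration. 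Your remarks about the fibre and the section are fine but peripheral; the core of the proposition is the fibration property, and your argument for it needs to be replaced by something like the paper's bar-construction lemma.
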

We fix some notations to be used in the proof. For a functor $U\colon \OGop\to \Ss$, let $U_k$ be composition of $U$ with the functor $\Ss\rightarrow \Set$ that takes a simplicial set to the set of its $k$-simplices. 
  Recall that the two sided bar construction for functors $U\colon\OGop\rightarrow \Ss$ and $V\colon\OG\rightarrow G\mbox{-}\Set$ gives a  bisimplicial set  $(k,l)\mapsto B_l(U_k,\OG,V)$, denoted by $B_\bullet(U_\bullet,\OG,V)$ (using the notation of \cite{ala}). The following lemma follows easily from explicit description of bar construction and definition of a Kan fibration \cite{may}.

\begin{lemma}\label{lem1}
  Suppose $U,U'\colon\OGop\rightarrow \Ss$ and $V\colon\OG\rightarrow G\mbox{-}\Set$ are functors. If $U\xrightarrow{\mu} U'$ is a natural transformation which is Kan fibration for each $G/H\in \OG$, then the induced map on the diagonal simplicial set $\diag B_\bullet(U_\bullet,\OG,V) \xrightarrow{\mu_*^{V}} \diag B_\bullet(U'_\bullet,\OG,V)$ is also a Kan fibration.
 \end{lemma}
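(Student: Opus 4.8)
The plan is to verify the right lifting property of $\diag\mu_*^{V}$ against the horn inclusions $\Lambda^n_i\hookrightarrow\Delta^n$ directly, exploiting the explicit form of the bar construction. First I would record the bidegreewise description: for fixed $l$, the simplicial set (in the $U$-direction) $B_l(U_\bullet,\OG,V)$ is the coproduct, indexed by chains $\vec c=(G/H_0\leftarrow\cdots\leftarrow G/H_l)$ of morphisms in $\OG$ together with an element of $V(G/H_l)$, of copies of $U_\bullet(G/H_0)$; the natural transformation $\mu_*^{V}$ acts as $\mu(G/H_0)$ on the $U_\bullet(G/H_0)$-coordinate and as the identity on all the combinatorial (chain- and $V$-) data. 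Since the product of a Kan fibration with a discrete set is a coproduct of copies of that fibration, and a coproduct of Kan fibrations is again a Kan fibration, it follows that for each fixed $l$ the map $B_l(U_\bullet,\OG,V)\to B_l(U'_\bullet,\OG,V)$ is a Kan fibration of simplicial sets.

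Next, for the diagonal itself, I would unwind a lifting problem. An $n$-simplex of $\diag B_\bullet(U_\bullet,\OG,V)$ is a triple $(\vec d,u,v)$ consisting of a length-$n$ chain $\vec d$ in $\OG$, an element $u\in U_n(G/H_0)$ at the initial object, and $v\in V(G/H_n)$; the diagonal face $d_j$ acts by simultaneously applying the $j$-th simplicial face to $u$ and the $j$-th bar face to $(\vec d,v)$. Because $\mu_*^{V}$ fixes the chain and $V$-coordinates, a lift of a given $n$-simplex $y=(\vec d,u',v)$ downstairs is forced to carry the same $\vec d$ and $v$, so the problem reduces to producing $u\in U_n(G/H_0)$ with $\mu(G/H_0)(u)=u'$ and prescribed faces. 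For the horn $\Lambda^n_0$ this is exactly the right lifting property of the Kan fibration $\mu(G/H_0)$ against $\Lambda^n_0\hookrightarrow\Delta^n$, since every face condition $d_ju=u_j$ with $j\geq 1$ is an honest constraint at the \emph{same} base object $G/H_0$, and I would carry this case out directly.

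The main obstacle is the interaction of the lifting with the outer bar face $d_0$, which does not merely take a face of $u$ but applies the map $U_n(G/H_0)\to U_n(G/H_1)$ induced by the first morphism of the chain, thereby changing the base object. Consequently, for a horn $\Lambda^n_i$ with $i\neq 0$ the $0$-th face imposes a \emph{twisted} condition of the form $U(\phi_1)(d_0u)=u_0$ rather than a plain face condition, so the given faces no longer assemble into an ordinary horn in $U_\bullet(G/H_0)$. The key step is therefore to rectify this twisted datum: using naturality of $\mu$ (which makes the square relating $\mu(G/H_0)$, $\mu(G/H_1)$ and the induced maps $U(\phi_1)$, $U'(\phi_1)$ commute) together with the horn-compatibility of the faces $x_j$, one seeks the missing $0$-face $d_0u$ lying over $d_0u'$ and inducing $u_0$, so as to complete an honest horn in $U_\bullet(G/H_0)$ and invoke the fibration property of $\mu(G/H_0)$. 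I expect this base-change compatibility for $d_0$ (and symmetrically for the $V$-twisted face $d_n$) to be the only delicate point; all remaining simplicial identities are routine bookkeeping. As an alternative packaging, the first two steps exhibit $\mu_*^{V}$ as the image under $\diag$ of a bisimplicial map that is a levelwise Kan fibration in the $U$-direction, and one may instead invoke the standard fact concerning diagonals of such bisimplicial maps, as suggested by the reference to \cite{may}.
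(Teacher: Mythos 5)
Your coproduct decomposition of $B_l(U_\bullet,\OG,V)$ and the conclusion that $B_l(U_\bullet,\OG,V)\to B_l(U'_\bullet,\OG,V)$ is a Kan fibration for each fixed $l$ are correct, as is your treatment of the single horn whose \emph{missing} face is the twisted outer face. But the step you defer to the end --- producing, for the remaining horns, an element $d_0u$ over $d_0u'$ with $U(\phi_1)(d_0u)=u_0$ --- is not routine bookkeeping: it is exactly where the argument breaks down, and it cannot be repaired, because the lemma is false in the stated generality. Take $G=\Z/2$, $V=\OG(G/e,-)$, $U'$ the constant functor at a point, and $U$ given by $U(G/G)=\ast$ and $U(G/e)=\{k_0,k_1\}$ (a discrete, hence Kan, simplicial set with trivial action of $\Aut_{\OG}(G/e)$), the structure map $U(G/e\to G/G)\colon\ast\to\{k_0,k_1\}$ hitting $k_0$. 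Then $\mu\colon U\to U'$ is a Kan fibration at every orbit. The $1$-simplex $\epsilon$ of $\diag B_\bullet(U'_\bullet,\OG,V)$ carried by the unique morphism $G/e\to G/G$ (with any $V$-decoration) has exactly one edge of $\diag B_\bullet(U_\bullet,\OG,V)$ lying over it, and the twisted outer face of that edge is forced to equal the vertex labelled $k_0$; the horn-lifting problem in which that face is prescribed to be the vertex labelled $k_1$ therefore has no solution. The obstruction is precisely that $U(\phi_1)$ need not be surjective, even fibrewise over $U'$, so the element you propose to ``seek'' need not exist.

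Your fallback is equally unavailable: there is no standard fact asserting that the diagonal of a bisimplicial map which is a levelwise Kan fibration in one direction is a Kan fibration. (The bisimplicial set whose every column is the discrete --- hence Kan --- simplicial set on the set of $l$-simplices of the nerve of the poset $\{0<1\}$ has diagonal $\Delta^1$, which is not fibrant, so even the absolute case fails.) For what it is worth, the paper offers no argument either; it asserts that the lemma ``follows easily from the explicit description of the bar construction and the definition of Kan fibration,'' which is exactly the step that fails. A correct statement requires an additional hypothesis (for instance that each structure map $U(\ha)$ is itself a fibration suitably compatible with $\mu$), or the conclusion must be weakened to one about weak equivalences or homotopy pullbacks; in either case the application in Proposition \ref{padjoint} would have to be re-examined using the particular functors $N^{\Delta}\circ\uc{n}$ and $N^{\Delta}\circ\chi_G(\upi,1)$ rather than the general lemma.
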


\begin{proof}[Proof of Proposition \ref{padjoint}]

 The only thing we need to show: $\BB^G(p)\colon \BB^G\uc{n} \to \BB^G\chi_G(\upi,1)$ is a $G$-fibration, i.e., $\BB^G(p)^H\colon \BB^G\uc{n}^H \to \BB^G\chi_G(\upi,1)^H$ is a Serre fibration for each $H\leq G$. We prove this by showing that $\BB^G(p)^H$ is a geometric realization of a Kan fibration. In this regard we use an explicit model of the Elmendorf functor $\Psi$. 
 Recall that $\BB^G(T)=\Psi(\realz{-}\circ N^{\Delta}\circ T)$ (cf. Definition \ref{dnerve}). 

 Define $\Psi^\Delta (U):=\diag B_\bullet(U_\bullet,\OG,\iota)\in \Ss$, where $\Gamma$ is the category of $G$-$\Set$ and $\iota$ is the inclusion functor $\OG \hookrightarrow G\mbox{-}\Set$. Actually, $\Psi^\Delta (U)$ is a $G$-simplicial set and its $H$-fixed point simplicial set is $\Psi^\Delta (U)^H=\diag B_\bullet(U_\bullet,\OG,\iota^H)$ where the functor $\iota^H\colon \OG\to \Set(\hookrightarrow G\mbox{-}\Set)$ is $G/K\mapsto (G/K)^H=\Hom_{\OG}(G/H,G/K)$ for each $K\leq G$.

 Then we can write $$\BB^G(T)=\realz{\Psi^\Delta (N^{\Delta}\circ T)}.$$ Then $\BB^G(p)=\realz{N^{\Delta}(p)_*^{\iota}}$ and $\BB^G(p)^H=\realz{N^{\Delta}(p)_*^{\iota^H}}$. But $N^{\Delta}(p)(G/H)$ is Kan fibration for each $G/H\in \OG$.  
 Hence the proof follows from lemma \ref{lem1}.
 \end{proof}
 We will use the model $\BB^G \chi_G(\upi,n)$ for $K_G(\upi,n)$. For a $G$-CW complex $(X,\theta)$ over $K_G(\upi,1),$ applying the functor $\Pi_G$ we get an object $\Pi_G(\theta)\colon \Pi_G(X)\to \chi_G(\upi,1)$ of the over category $\OG$-$\Crs/\chi_G(\upi,1)$.
The simplicial mapping space $\Coh\underline{\Crs}((S,\epsilon),(T,\eta))_{\chi_G(\upi,1)}$ in  $\OG\mbox{-}\Crs/\chi_G(\upi,1)$ is defined by the pull-back square: 
\begin{mydiagram}[\label{diagcoh}]
{\Coh\underline{\Crs}((S,\epsilon),(T,\eta))_{\chi_G(\upi,1)}\ar[rr]\ar[d] && \Coh\underline{\Crs}(S,T)\ar[d]^{\eta_*}\\
\ast \ar[rr] && \Coh\underline{\Crs}(S,\chi_G(\upi,1))
}
\end{mydiagram}
In view of Theorem \ref{thogcrsadjoint}, it follows from general category theory that 
\begin{myeq}\label{eqadjoint}
 \Maps_G(X,\BB^G T)_{K_G(\upi,1)}\simeq \Coh\underline{\Crs}(\Pi_G(X),T)_{\chi_G(\upi,1)}.
\end{myeq}
\noindent

 We obtain the following representation of Bredon cohomology with local coefficients in $\OG$-diagram of crossed complexes.
\begin{thm}\label{thmfirst}
Let $\upi$ be an $\OG$-group and $\MM=(M,\uphi)$ an $\upi$-module. For $(X,\theta)\in G\mbox{-}CW/K_G(\upi,1)$ we have 
 $$H^n_G(X;\theta^*\MM)\cong  \pi_0 \Coh\underline{\Crs}(\Pi_G(X),\uc{n})_{\chi_G(\upi,1)}.$$ 
\end{thm}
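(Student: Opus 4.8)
The plan is to assemble the statement by chaining together the representability results already established, passing from Møller's topological model to the crossed-complex model and then to the coherence simplicial set. First I would invoke Theorem \ref{thmol} together with Corollary \ref{thmol0} to obtain, for every $n\geq 0$, a natural bijection
$$H^n_G(X;\theta^*\MM)\cong [X,L_{\upi}(M,n)]_{K_G(\upi,1)},$$
where the right-hand side is the set of homotopy classes of maps in the over category $G\mbox{-}\CW/K_G(\upi,1)$. This reduces the problem to rewriting these fiberwise homotopy classes in terms of $\OG$-crossed complexes.

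Next I would replace the representing fibration $L_{\upi}(M,n)\to K_G(\upi,1)$ by its crossed-complex model. By construction $\BB^G\uc{n}\simeq L_{\upi}(M,n)$ and $\BB^G\chi_G(\upi,1)\simeq K_G(\upi,1)$; the essential point is that these equivalences are compatible with the projections to the base, which is precisely the content of Proposition \ref{padjoint}, asserting that applying $\BB^G$ to the map of $\OG$-crossed complexes \ref{eqcrsfib} reproduces the sectioned $G$-fibration \ref{eqfib}. Using the model $\BB^G\chi_G(\upi,1)$ for $K_G(\upi,1)$, I would thus obtain a bijection
$$[X,L_{\upi}(M,n)]_{K_G(\upi,1)}\cong [X,\BB^G\uc{n}]_{K_G(\upi,1)}$$
of fiberwise homotopy classes over $\BB^G\chi_G(\upi,1)$.

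Finally I would pass to simplicial mapping spaces. Since $X$ is a $G$-CW complex and $\BB^G(p)$ is a $G$-fibration, the fiberwise homotopy classes are computed by $\pi_0$ of the simplicial mapping space in the over category, giving
$$[X,\BB^G\uc{n}]_{K_G(\upi,1)}\cong \pi_0\,\Maps_G(X,\BB^G\uc{n})_{K_G(\upi,1)}.$$
Then the adjunction \ref{eqadjoint}, applied with $T=\uc{n}$, yields a weak equivalence of Kan complexes between this mapping space and $\Coh\underline{\Crs}(\Pi_G(X),\uc{n})_{\chi_G(\upi,1)}$, where the latter is the relative coherence simplicial set defined by the pullback \ref{diagcoh}; taking $\pi_0$ produces the desired isomorphism.

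The main obstacle is to keep every identification genuinely relative to $K_G(\upi,1)$. Passing from a $G$-equivariant statement to a fiberwise one requires that each equivalence respect the structure maps to the base, and this is exactly where Proposition \ref{padjoint} is needed: without knowing that $\BB^G(p)$ is a $G$-fibration modelling \ref{eqfib}, neither the identification of $L_{\upi}(M,n)$ with $\BB^G\uc{n}$ over the base nor the computation of $[X,\BB^G\uc{n}]_{K_G(\upi,1)}$ as $\pi_0$ of the over-category mapping space would be justified. Once this fibration compatibility is secured, the remaining steps are formal manipulations with the pullback definition \ref{diagcoh} and the adjunction \ref{eqadjoint}.
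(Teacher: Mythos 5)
Your proposal is correct and follows essentially the same route as the paper: the paper's proof also specializes the adjunction \ref{eqadjoint} to $T=\uc{n}$, uses Proposition \ref{padjoint} to identify $\BB^G\uc{n}\to\BB^G\chi_G(\upi,1)$ with the classifying fibration \ref{eqfib}, and then invokes Theorem \ref{thmol} and Corollary \ref{thmol0} after applying $\pi_0$. You have merely written the same chain of identifications in the opposite order and with more explicit attention to the fiberwise compatibility, which is a reasonable expansion of the paper's terser argument.
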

\begin{proof}
 We specialize $T~\in \OG\mbox{-}\Crs/\chi_G(\upi,1)$ to be $ \uc{n}\xrightarrow{p} \chi_G(\upi,n)$ in Eq. \ref{eqadjoint}. In view of Proposition \ref{padjoint}, $\pi_0 \Maps_G(X,\BB^G T)_{K_G(\upi,1)}$ is the right hand side of the isomorphism in Theorem \ref{thmol} for $n\geq 1$ and Corollary \ref{thmol0} for $n=0$. Hence applying $\pi_0$ to Eq \ref{eqadjoint} gives the desired result.
\end{proof}

If $G$ is trivial, then the simplicial set of homotopy coherent transformations between $\OG$-diagrams in $\Crs$ reduces to the ordinary simplicial enrichment of $\Crs$. Further, $\upi$ reduces to a group $\pi$ and $(M,\uphi)$ is a $\pi$-module $(A,\phi)$. This gives local coefficients $\A$ on $K(\pi,1)$. Let $X$ be a CW complex and $\theta\colon X\to K(\pi,1)$ be a map of CW complexes. 
\begin{corollary}\label{trloc}
 With notations as above, $$H^n(X;\theta^*\A)\cong [\Pi(X),\chi_{\phi}(A,n))]_{\chi(\pi,1)}$$ where right hand side denotes the homotopy classes of maps in $\Crs/\chi(\pi,1)$ from  $ \Pi(X)\xrightarrow{\Pi(\theta)} \chi(\pi,1)$ to $\chi_{\phi}(A,n)\xrightarrow{p}\chi(\pi,1)$.
\end{corollary}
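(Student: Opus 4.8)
The plan is to obtain the corollary as the special case $G=\{e\}$ of Theorem~\ref{thmfirst}, tracking how each equivariant ingredient degenerates. When $G$ is trivial its only subgroup is $\{e\}$, so the orbit category $\OG$ has a single object $G/\{e\}$ and an $\OG$-diagram is determined by its value there. Under this collapse $\Pi_G(X)$ becomes $\Pi(X)$, the $\OG$-group $\upi$ becomes an ordinary group $\pi$, the $\upi$-module $(M,\uphi)$ becomes a $\pi$-module $(A,\phi)$, and by the definitions of \S\ref{ucs} the diagram $\uc{n}$ becomes $\chi_\phi(A,n)$ while $\chi_G(\upi,1)$ becomes $\chi(\pi,1)$. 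On the left-hand side the array defining $C^n_G(X;\MM)$ has a single entry, so Bredon cohomology with local coefficients reduces to the ordinary cohomology $H^n(X;\theta^*\A)$.

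For the right-hand side, since $\OG$ has one object a homotopy coherent transformation of $\OG$-diagrams is just a point of the simplicial mapping space of the enriched category $\Crs$; thus $\Coh\underline{\Crs}(\Pi(X),\chi_\phi(A,n))$ is the internal hom $\CRS(\Pi(X),\chi_\phi(A,n))$ recalled before Definition~\ref{mhomotopy}, and the pullback \ref{diagcoh} becomes the pullback of $p_*\colon \CRS(\Pi(X),\chi_\phi(A,n))\to \CRS(\Pi(X),\chi(\pi,1))$ along the point $\Pi(\theta)$. The step that needs care, and which I regard as the only real content, is the passage to $\pi_0$: one must check that $\pi_0$ of this fibrewise mapping space is exactly $[\Pi(X),\chi_\phi(A,n)]_{\chi(\pi,1)}$. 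Because $\CRS_0(\CC,\DD)$ is the set of crossed-complex maps and $\CRS_1(\CC,\DD)(f,g)$ is the set of homotopies from $f$ to $g$ (Definition~\ref{homotopy}), and the hom-simplicial-sets are Kan complexes on which homotopy is an equivalence relation, one has $\pi_0\CRS(\CC,\DD)=[\CC,\DD]$ in general. To descend this to the over-category I would use that $p\colon\chi_\phi(A,n)\to\chi(\pi,1)$ is a fibration of crossed complexes, so $p_*$ is a Kan fibration, the square \ref{diagcoh} is a homotopy pullback, and $\pi_0$ therefore computes the homotopy classes of maps over $\chi(\pi,1)$.

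Assembling these identifications exactly as in the proof of Theorem~\ref{thmfirst}, and invoking Theorem~\ref{thmol} for $n\geq1$ and Corollary~\ref{thmol0} for $n=0$, gives the stated isomorphism. As a consistency check: a map over $\chi(\pi,1)$ induces on fundamental groups the homomorphism $\alpha=\theta_*\colon\pi_1(X)\to\pi$, so the corollary recovers Theorem~\ref{thbprop}, where the same classes appear decorated as $[\Pi(X),\chi_\phi(A,n)]_{\alpha}$.
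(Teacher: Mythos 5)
Your proposal is correct and follows the same route as the paper, which obtains the corollary precisely by specializing Theorem \ref{thmfirst} to trivial $G$ (the paragraph preceding the corollary in the paper is essentially the whole argument: the orbit category collapses to a point, homotopy coherent transformations reduce to the simplicial enrichment of $\Crs$, and $\pi_0$ of the fibre of $p_*$ over $\Pi(\theta)$ gives homotopy classes over $\chi(\pi,1)$). Your added care about why $\pi_0$ of the strict pullback computes over-category homotopy classes, using that $p$ is a fibration of crossed complexes so $p_*$ is a Kan fibration, is a reasonable filling-in of a step the paper leaves implicit.
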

When $X$ is a reduced CW complex, Corollary \ref{trloc} agrees with Theorem \ref{thbprop} for $n\geq 2$ by taking $\alpha=\Pi(\theta)$ and noting that homotopy classes of maps in $\Crs/\chi(\pi,1)$ from  $ \Pi(X)\xrightarrow{\Pi(\theta)} \chi(\pi,1)$ to $\chi_{\phi}(A,n)\xrightarrow{p}\chi(\pi,1)$ is same as homotopy classes of maps $\Pi(X)\to \chi_{\phi}(A,n)$ inducing $\alpha$ in fundamental groups.
\end{mysubsection}

\section{Representability of cohomology with local coefficients as a parametrized spectrum}\label{slocparspc}
 In this section we use Corollary \ref{trloc} to represent the cohomology with local coefficients using parametrized spectra. Following \cite{par} we construct an $\Omega$-prespectrum $J_\pi A$ over the parameter space $K(\pi,1)$ associated to a $\pi$-module $A$, and prove that the cohomology theory on the category of spaces over $K(\pi,1)$ associated to this spectrum is the cohomology with local coefficients. We use the classifying space functor $\BB$ from crossed complexes to spaces to define this parametrized spectrum. Recall the definition of parametrized spectra following \cite{par}.

We work in the category of compactly generated weak Hausdorff spaces.
\begin{defn}\label{dexspace}
An \emph{ex-space} over $B$ is a triple $(X,p,s)$ where $X$ is a topological space and $B\stackrel{s}{\rightarrow} X\stackrel{p}{\rightarrow} B$ such that $p\circ s = id$.

A morphism of ex-spaces $(X,p,s)\stackrel{f}{\rightarrow} (Y,p',s')$ is a map $f\colon X\rightarrow Y$ such that $p'\circ f=p$ and $f\circ s=s'$.
\end{defn}
 We denote category of all ex-spaces over the space $B$ by $\TT_B$. The category $\TT_B$ is enriched over topological spaces and has all colimits and limits. There is a fibrewise smash product $\wedge_B$ and a fibrewise mapping space $F_B$ of ex-spaces, and these are adjoint.
We recall the relevant definitions briefly (see \cite{par} for more details).
\begin{defn}
 Let $X\xrightarrow{p}B$ and $Y\xrightarrow{p'}B$ be spaces over $B$.
\begin{itemize}
 \item The pullback of $X\xrightarrow{p} B \xleftarrow{p'} Y$ is denoted by $X\times _B Y$. It has an evident map to $B$.
\item The pullback of $\ast\rightarrow \Maps(X,B)\xleftarrow{p'^*} \Maps(X,Y)$ is denoted by $\Maps_B(X,Y)$.
\end{itemize}
\end{defn}
\begin{defn}
 Let $(X,p,s), (Y,p',s')$ be ex-spaces over $B$.
\begin{itemize}
 \item The pushout of $X\xleftarrow{s} B\xrightarrow{s'} Y$ is denoted by $X\vee_B Y$, the fibrewise wedge of $X$ and $Y$. It has an naturally induced map $X\vee_B Y\rightarrow X\times_B Y$ over $B$.
\item The pushout of $\ast_B\leftarrow X\vee_B Y \rightarrow X\times_B Y$ is the smash product $X\wedge_B Y$ in $\TT_B$.
\end{itemize}
\end{defn}

The definition of the mapping space in $\TT_B$ is more complicated. We refer to \cite{par} for details.  For objects $X,Y \in \TT_B$ the mapping space is written as $F_B(X,Y)$ and there is  a correspondence 
$$F_B(X\wedge_B Y,Z)\cong F_B(X,F_B(Y,Z)).$$

For a pointed space $Y$, let $Y_B$ denote the ex-space $(Y\times B,p,s)$, where $p$ is the projection onto the second factor and $s$ is the cross section determined by the base point of $Y$. The \emph{fibrewise loop functor} is then defined as $\Omega_B(X):=F_B(S^1_B,X)$. In, general, for a finite dimensional inner product space $V$, let $S^V$ denote the one point compactification of $V$. So we have $$\Sigma_B^V(X):=X\wedge_B S^V_B,~~\Omega_B(X):=F_B(S_B^V,X).$$
These definitions match up with the corresponding ones in Section 3. 
\begin{remark}\label{susprem}
Note that one can recover Definition \ref{susp} from the above. For a space $X$ over $B$ one can construct an ex-space over $B$ by adding a ``disjoint basepoint". We denote $X_{+B}=X\sqcup B$ with the evident projection to $B$ and section from $B$. The above formula readily yields 
$$\Sigma_B(X_{+B}) \simeq S_B(X)$$ 

\end{remark}

\begin{remark}\label{fibreloop}
For an ex-space $Y$ over $B$ the fibrewise loops $\Omega_B Y$ matches the definition \ref{fibreloop1}. That is,
$$\Omega_BY\cong \{ f\in \Maps(S^1,Y)|~\exists ~b \in B ~\mbox{so ~that}~ p(f(t))=b,~f(*)=s(b)\}.$$
 Note that this can be defined by the following pullbacks
$$\xymatrix{ M_B(S^1,Y) \ar[r] \ar[d] & \Maps(S^1,Y) \ar[d] \\
                   B \ar^{e}[r] & \Maps(S^1,B) }$$
where $e(b)$ is the constant map at $b$. Then one has a pullback 
$$\xymatrix{ \Omega_B Y \ar[r] \ar[d] & M_B(S^1,Y) \ar^{ev_*}[d] \\
                   B \ar^{s_Y}[r] & Y }$$
 
\end{remark} 

A map $(X,p,s)\stackrel{f}{\rightarrow} (Y,p',s')$ of ex-spaces over $B$ is called an \emph{$q$-equivalence} if $X\xrightarrow{f}Y$ is a weak equivalence of spaces (forgetting the ex-space structure). The notion of \emph{$qf$-fibrations} is defined as maps satisfying the right lifting property with respect to certain inclusions $S^n_+ \rightarrow D^{n+1}$ where $S^n_+$ is the closed upper hemisphere of $D^{n+1}$ (see \cite[Definition $6.2.3$]{par} for details). Here `$f$' refers to fibrewise. Since the inclusion of the upper hemisphere is an acyclic cofibration the condition is satisfied for Serre fibrations. Thus, if the map $X\rightarrow B$ is a Serre fibration then $X$ is $qf$-fibrant. 

\begin{thm}[Theorem 6.2.6 \cite{par}]
 The category $\TT_B$ of ex-spaces over $B$ is a well-grounded model
category with respect to the $q$-equivalences, $qf$-fibrations, and $qf$-cofibrations.
\end{thm}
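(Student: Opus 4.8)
The plan is to recognize this as an instance of the general machinery for constructing cofibrantly generated --- indeed \emph{well-grounded} --- model structures on topologically bicomplete categories, exactly as developed by May and Sigurdsson, and to verify the hypotheses of Kan's recognition theorem. The category $\TT_B$ is complete, cocomplete, and enriched over based topological spaces, and the class $W$ of $q$-equivalences is the class of maps that become weak homotopy equivalences after forgetting the ex-structure; this is the class we take as the weak equivalences. The whole construction then reduces to exhibiting two sets of maps, a set $I$ of generating $qf$-cofibrations and a set $J$ of generating acyclic $qf$-cofibrations, whose relative cell complexes and injective maps have the expected properties.

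First I would fix the generating sets. The subtlety that forces the ``$qf$'' refinement --- as opposed to the naive $q$-model structure, whose cofibrations fail to be fibrewise closed inclusions and hence interact badly with $\wedge_B$ and base change --- is that the cells must themselves be \emph{$f$-cofibrations}, i.e.\ fibrewise well-grounded inclusions. Concretely, starting from the standard generating sets of based spaces, $I_{\mathrm{sp}}=\{S^{n-1}_+\to D^n_+\}$ and $J_{\mathrm{sp}}=\{D^n_+\to (D^n\times \I)_+\}$, I would form parametrized cells by smashing fibrewise over $B$ with ex-spaces determined by maps $\phi\colon D^n\to B$, retaining only those whose total inclusions are $f$-cofibrations. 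This produces $I$ and $J$ consisting of honest fibrewise cofibrations, and by construction the $qf$-fibrations are precisely the maps with the right lifting property against $J$ (equivalently against the hemisphere inclusions $S^n_+\to D^{n+1}$ parametrized over $B$), while the acyclic $qf$-fibrations are those with the right lifting property against $I$.

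Next I would run the recognition theorem. This requires: (a) the \emph{Cofibration Hypothesis} and smallness, namely that the domains of maps in $I$ and $J$ are compact relative to the respective cell complexes, which follows from the compact generation of the ground category together with the fact that $qf$-cells are closed inclusions; (b) that every relative $J$-cell complex lies in $W$ and is an $I$-cofibration, proved by a fibrewise deformation-retract argument showing each generating acyclic cofibration is a fibrewise homotopy equivalence and then invoking the gluing lemma along pushouts; and (c) the matching condition that the $I$-injective maps are exactly those that are simultaneously $q$-equivalences and $J$-injective, i.e.\ a map is an acyclic $qf$-fibration if and only if it is a $qf$-fibration and a $q$-equivalence. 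The forward inclusion is a lifting argument, while the reverse uses the long exact sequence of a Serre fibration on total spaces to upgrade a $q$-equivalence that is a $qf$-fibration to an $I$-injective map.

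Finally I would verify the additional \emph{well-grounded} axioms on top of the bare model structure: that $W$ is a well-grounded subcategory of weak equivalences (closed under the relevant filtered colimits, wedges, and pushouts along $f$-cofibrations, and satisfying the gluing and colimit lemmas), that all generating cofibrations are $f$-cofibrations, and that the Cofibration Hypothesis holds for $I$ and $J$. I expect the genuine obstacle to be step two together with parts (b) and (c) of step three: choosing the generating cells so that they are simultaneously fibrewise cofibrations \emph{and} cofibrantly generate the correct class, and then verifying both the acyclicity of the $J$-cells and the injective-map characterization. This is precisely the technical heart of the May--Sigurdsson $qf$-theory; once it is in place the model axioms and well-groundedness follow formally, and accordingly the full verification is that of \cite[Theorem 6.2.6]{par}.
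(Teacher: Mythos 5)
The paper offers no proof of this statement at all --- it is imported verbatim as Theorem 6.2.6 of May--Sigurdsson \cite{par}, and your sketch is a faithful outline of the argument given there (recognition theorem applied to the generating sets of $qf$-cells that are fibrewise cofibrations, with the compatibility and well-groundedness conditions as the technical core). Your proposal is correct and follows essentially the same route as the cited source; nothing further is needed.
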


To define spectra, we first fix a countable infinite dimensional inner product space $\UU$. Following \cite{par}, we define a prespectrum $E$ over $B$. 
\begin{remark}
A prespectrum/spectrum over $B$ always stands for an object in the category of parametrized spectra over a space $B$. It should not be confused as an object in the over category of some fixed prespectrum/spectrum $B$.
\end{remark}
\begin{defn}\label{dparasec}
Let $B$ be a fixed base space.
\begin{itemize}
 \item A \emph{prespectrum} $E$ over $B$ consists of ex-spaces $E(V)$ over $B$ for each finite dimensional subspace $V$ of $\UU$, together with maps of ex-spaces, (called structure maps) $\sigma^{V,W}\colon \Sigma^{W-V}_B E(V)\rightarrow E(W)$ for $V\subseteq W$, such that the following are satisfied:
\begin{enumerate}[(i)]
 \item For each finite dimensional $V\subset \UU$, $\sigma^{V,V}=id$.
\item For finite dimensional $V,W,Z\subset \UU$, $\sigma^{V,Z}=\sigma^{W,Z}\circ\Sigma_B^{Z-W}\sigma^{V,W}.$
\end{enumerate}
\item A prespectrum $E$ over $B$ is level $qf$-fibrant if each $E(V)$ is a $qf$-fibrant ex-space over $B$.
\item An $\Omega$-prespectrum $E$ over $B$ is defined to be a level $qf$-fibrant prespectrum over $B$ whose adjoint structure maps $\widetilde{\sigma}^{V,W}\colon E(V) \rightarrow \Omega_B^{W-V} E(W)$ are $q$-equivalences of ex-spaces over $B$.
\end{itemize}
\end{defn}
Consider the crossed complex $\chi_{\phi}(A,n)$ associated to a $\pi$-module $(A,\phi)$ (cf. Section \ref{ucs}). Since the classifying space of a crossed complex is the geometric realization of a simplicial set, $\BB(\chi_{\phi}(A,n))$ is compactly generated and weak Hausdorff.
\begin{prop}\label{pexsp}
The classifying space $\BB(\chi_{\phi}(A,n))$ is an ex-space over $K(\pi,1)= \BB(\chi(\pi,1))$ which is $qf$-fibrant.
\end{prop}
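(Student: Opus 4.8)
The plan is to establish two things: that $\BB(\chi_{\phi}(A,n))$ is genuinely an ex-space over $K(\pi,1)$, and that the structure map to the base is a $qf$-fibration.

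For the ex-space structure, I would invoke the canonical fibration of crossed complexes $p\colon \chi_{\phi}(A,n)\rightarrow \chi(\pi,1)$ from Section \ref{ucs}. Applying the classifying space functor $\BB$ gives a map $\BB(p)\colon \BB(\chi_{\phi}(A,n))\rightarrow \BB(\chi(\pi,1)) = K(\pi,1)$, which serves as the projection $p$ of the ex-space. The section $s$ should come from the fact that $\chi_{\phi}(A,n)\rightarrow \chi(\pi,1)$ is a \emph{sectioned} fibration: the crossed complex $\chi(\pi,1)$ includes into $\chi_{\phi}(A,n)$ as the evident subcomplex (the $\pi$ in level $1$, trivial elsewhere) splitting $p$, and functoriality of $\BB$ yields $\BB(s)$ with $\BB(p)\circ\BB(s)=\id$. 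Since $\BB$ is a geometric realization of a simplicial set (Definition \ref{dnerve}), the resulting spaces are compactly generated weak Hausdorff, as already noted in the paragraph preceding the statement, so the triple $(\BB(\chi_{\phi}(A,n)),\BB(p),\BB(s))$ lives in $\TT_{K(\pi,1)}$.

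For $qf$-fibrancy, the key reduction is the remark in the excerpt that any map which is a Serre fibration is automatically a $qf$-fibration (since the upper-hemisphere inclusions $S^n_+\hookrightarrow D^{n+1}$ are acyclic cofibrations). Hence it suffices to show $\BB(p)$ is a Serre fibration. I would argue this by passing through the nerve: $p\colon \chi_{\phi}(A,n)\rightarrow \chi(\pi,1)$ induces a map of simplicial sets $N^{\Delta}(p)\colon N^{\Delta}(\chi_{\phi}(A,n))\rightarrow N^{\Delta}(\chi(\pi,1))$, and because $p$ is a fibration of crossed complexes (in the model structure of \cite{rbrown4}) and $N^{\Delta}$ is a right Quillen-type nerve functor, $N^{\Delta}(p)$ is a Kan fibration. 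Geometric realization carries Kan fibrations to Serre fibrations (a standard fact, e.g. via the realization of a minimal fibration or Quillen's theorem), so $\BB(p)=\realz{N^{\Delta}(p)}$ is a Serre fibration, hence a $qf$-fibration. This is structurally parallel to the argument already given in the proof of Proposition \ref{padjoint}, where $N^{\Delta}(p)$ being a Kan fibration is used directly.

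The main obstacle I anticipate is verifying cleanly that $N^{\Delta}(p)$ is a Kan fibration; one wants this to follow formally from $p$ being a fibration in $\Crs$ together with the adjunction $\Pi\dashv N^{\Delta}$ of Theorem \ref{thadjoint}, but making the lifting-property check rigorous requires knowing that $N^{\Delta}$ sends fibrations to Kan fibrations, which rests on the model-categorical properties of $\Crs$ recorded in \cite{rbrown4, rbrown2}. Since the explicit structure of $p$ is so simple (identity on the group level, with fibre the crossed complex $\chi(A,n)$ whose nerve is $K(A,n)$), one can alternatively exhibit $\BB(p)$ directly as the sectioned fibration $K(A,n)\hookrightarrow L_{\pi}(A,n)\xrightarrow{p} K(\pi,1)$ already recognized as the classifying fibration for cohomology with local coefficients, and cite that this projection is a Serre fibration. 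Either route then concludes by the acyclic-cofibration observation that Serre fibrant implies $qf$-fibrant.
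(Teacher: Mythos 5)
Your proposal is correct and follows essentially the same route as the paper: the section is induced by the evident splitting $\chi(\pi,1)\to\chi_{\phi}(A,n)$ of crossed complexes, and $qf$-fibrancy is reduced to $\BB(p)$ being a Serre fibration, hence to $N^{\Delta}(p)$ being a Kan fibration. The one step you flag as an obstacle --- that $N^{\Delta}$ carries fibrations of crossed complexes to Kan fibrations --- is exactly what the paper settles by citing Proposition~6.2 of \cite{rbrown3}, after checking the elementary criterion that $p$ is a fibration of groupoids at level $1$ and surjective above.
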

\begin{proof}
The map $p$ is obtained by applying $\BB$ to the map $\chi_{\phi}(A,n)\rightarrow \chi(\pi,1)$. The map $\chi(\pi,1) \rightarrow \chi_{\phi}(A,n)$ for $n\geq 2$ is the identity at level 1 and is the inclusion of the identity at higher levels. For $n=1$ this is the inclusion of $\pi$ in $\pi \ltimes A$ at the first level. For $n=0$ the map takes $\pi$ to the endomorphisms of the identity in $A$ in the groupoid $\mathit{Gpd}(\pi,A)$. It is clear that the composition $\chi(\pi,1) \rightarrow \chi_{\phi}(A,n) \rightarrow \chi(\pi,1)$ is the identity.

To complete the proof we need to show that $\BB(\chi_{\phi}(A,n))\rightarrow \BB(\chi(\pi,1))$ is a Serre fibration, which in view of the discussion above implies that $\BB(\chi_{\phi}(A,n))$ is a $qf$-fibrant ex-space. Since the geometric realization of a Kan fibration is a Serre fibration, it is enough to show that $N^{\Delta}(\chi_{\phi}(A,n))\rightarrow N^{\Delta}(\chi(\pi,1))$ is a Kan fibration of simplicial sets, which by (\cite[Proposition $6.2$]{rbrown3}) is equivalent to the condition that $\chi_{\phi}(A,n) \rightarrow \chi(\pi,1)$ is a fibration of crossed complexes.

Recall that a map $p: G \to H$ of groupoids is called a fibration if for each object $x$ of $G$ and each morphism $b$ of $H$ starting at $p(x)$ there is a morphism $e$ of $G$ starting at $x$ such that $p(e)=b$. A map between crossed complexes is a fibration if it is a fibration of groupoids on level $1$ and surjective on higher levels (see \cite{rbrown4, rbrown1}). This is satisfied by $\chi_{\phi}(A,n)\rightarrow \chi(\pi,1)$. This completes the proof of the proposition.

\end{proof}

We will prove that $$J_\pi A(\R^n):= \BB(\chi_{\phi}(A,n))$$ defines an $\Omega$-prespectrum over $\BB(\chi(\pi,1))$. This defines a spectrum indexed over a cofinal collection of inner product subspaces of $\UU$, we can extend it to the entire collection by the formula
$$J_\pi A(V)=\Omega^{\R^n-V}_{K(\pi,1)}J_\pi A(\R^n),$$
 where $n$ is the minimum positive integer for which $V\subset\R^n$. By the above, $J_\pi A$ is level $qf$-fibrant on the indexing spaces $\R^n$. It follows that $J_\pi A(V)\rightarrow K(\pi,1)$ is a $qf$-fibration since this is equal to $\Omega^{\R^n-V}_{K(\pi,1)}J_\pi A(\R^n)$ which is written as an iterated pullback, and that fibrations are preserved under pullback.

It remains to construct the structure maps of $J_\pi A(V)\rightarrow \Omega_{K(\pi,1)}^{W-V}J_\pi A(W)$ and show that they are weak equivalences, and again it suffices to prove this for the cofinal indexing collection $\lbrace \R^n\rbrace$. In this regard, it remains to construct maps $\BB(\chi_{\phi}(A,n))\rightarrow \Omega_{\BB(\chi(\pi,1))} \BB(\chi_{\phi}(A,n+1))$ and to show that these are weak equivalences.

We define a construction similar to $F_B(S^1_B,-)$ in the category of crossed complexes.  The classifying space of this construction is weakly equivalent to $F_B(S^1_B,-)$ as ex-spaces over the space $B$. Fix the model $\chi(\ints,1)=\Pi(S^1)$ of $S^1$ in crossed complexes (defined using the CW complex structure of $S^1$ with one $1$-cell and one $0$-cell).

We can form a map $\chi(\pi,1)\stackrel{\iota}{\rightarrow} \CRS(\chi(\ints,1),\chi(\pi,1))$ which maps each point to the corresponding ``constant map"; we then define $\CRS_{\chi(\pi,1)}(\chi(\ints,1),\chi_{\phi}(A,n))$ to be the pullback
$$\xymatrix{  \CRS_{\chi(\pi,1)}(\chi(\ints,1),\chi_{\phi}(A,n))\ar[r] \ar[d] & \CRS(\chi(\ints,1),\chi_{\phi}(A,n)) \ar[d]\\
              \chi(\pi,1) \ar[r]^{\iota}              & \CRS(\chi(\ints,1),\chi(\pi,1)) }$$
Then we further form the pullback to define $F^{\Crs}_{\chi(\pi,1)}( \chi(\ints,1),\chi_{\phi}(A,n))$
$$\xymatrix{F^{\Crs}_{\chi(\pi,1)}(\chi(\ints,1),\chi_{\phi}(A,n))\ar[r] \ar[d] & \CRS_{\chi(\pi,1)}(\chi(\ints,1),\chi_{\phi}(A,n)) \ar[d]^{\epsilon}\\
              \chi(\pi,1) \ar[r]              & \chi_{\phi}(A,n) }$$
where $\epsilon$ is the evaluation at the base point map. Note that the pullback in the category of crossed complexes is obtained by taking the pullback levelwise, i.e, the pullback $P$ of $\CC\rightarrow \DD \leftarrow \CC'$ is described by $P_n=C_n\times_{D_n} C'_n$.

We calculate $\CRS(\chi(\ints,1),\chi(\pi,1))$ using its definition in terms of $m$-fold homotopies (See Definition \ref{mhomotopy}). We have
\begin{equation*}
 \begin{split}
  & \CRS_0(\chi(\ints,1),\chi(\pi,1))= \pi,\\
  & \CRS_1(\chi(\ints,1),\chi(\pi,1))(f,g)=\{ x\in \pi| f=xg x^{-1}\},\\
  & \CRS_m(\chi(\ints,1),\chi(\pi,1))(f,f)=* ~\mathit{if}~ m\geq 2.
 \end{split}
\end{equation*}
Next we calculate $\CRS(\chi(\ints,1),\chi_{\phi}(A,n))$. 
$$\CRS_0(\chi(\ints,1),\chi_{\phi}(A,n))= \left\{ \begin{array}{ll}
 \Hom(\ints,\pi)\cong \pi &\mbox{ if $n\geq 2$} \\
\pi \ltimes A   &\mbox{ if $n=1$}
       \end{array} \right.
       $$

$$\CRS_1(\chi(\ints,1),\chi_{\phi}(A,n))(f,g)= \left\{ \begin{array}{ll}
 \{ x\in \pi| f=xgx^{-1}\} &\mbox{ if $n\geq 3$} \\
\{ x\in \pi | xfx^{-1}=g\} \times A &\mbox{ if $n=2$}\\
\{ x\in \pi \ltimes A | xfx^{-1}=g \}  &\mbox{ if $n=1$}
       \end{array} \right.
       $$

For $n\geq 3,~m\geq 2$,
$$\CRS_m(\chi(\ints,1),\chi_{\phi}(A,n))(f,f)= \left\{ \begin{array}{ll}
 \ast & \mbox{ if $m\neq n-1,n$}\\
  \Hom_\pi(\pi,A)\cong A & \mbox{ if $m=n-1$}\\
     \Maps(*,A)  &\mbox{ if $m=n$}
       \end{array} \right.
       $$
For $n=2,~m\geq 2$ we have
$$\CRS_m(\chi(\ints,1),\chi_{\phi}(A,2))(f,f)= \left\{ \begin{array}{ll}
 \Hom(*,A) & \mbox{ if $m=2$}\\
  \ast & \mbox{ if $m>2$}
           \end{array} \right.
$$
and for $n=1,~m\geq 2$ we have
$$\CRS_m(\chi(\ints,1),\chi_{\phi}(A,1))(f,f)= \ast $$

We have the following proposition.
\begin{prop}\label{pparcrs}
 For a $\pi$-module $(A,\phi)$, $F^{\Crs}_{\chi(\pi,1)}(\chi(\ints,1),\chi_{\phi}(A,n)) \cong \chi_{\phi}(A,n-1)$.
\end{prop}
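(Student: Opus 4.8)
The plan is to compute the crossed complex $F^{\Crs}_{\chi(\pi,1)}(\chi(\ints,1),\chi_{\phi}(A,n))$ level by level and match it dimension-by-dimension against the definition of $\chi_{\phi}(A,n-1)$ given in Section \ref{ucs}. Since the excerpt records that pullbacks in $\Crs$ are formed levelwise (the pullback $P$ of $\CC\to\DD\leftarrow\CC'$ has $P_m=C_m\times_{D_m}C'_m$), and since the two defining pullback squares for $F^{\Crs}_{\chi(\pi,1)}$ reduce the computation to data already tabulated, the whole proof is essentially an identification of these levelwise fibre products. First I would assemble, in each simplicial degree $m$, the three simplicial sets $\CRS_m(\chi(\ints,1),\chi(\pi,1))$, $\CRS_m(\chi(\ints,1),\chi_{\phi}(A,n))$, and the base terms $\chi(\pi,1)_m$, $\chi_{\phi}(A,n)_m$, all of which are listed explicitly just above the statement.

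The key steps, in order, are as follows. The first pullback defines $\CRS_{\chi(\pi,1)}(\chi(\ints,1),\chi_{\phi}(A,n))$ as the fibre over $\chi(\pi,1)$ of the map $\CRS(\chi(\ints,1),\chi_{\phi}(A,n))\to\CRS(\chi(\ints,1),\chi(\pi,1))$ induced by $p$; concretely this restricts attention to those homotopies and $m$-fold homotopies whose image in the $\chi(\pi,1)$-enrichment is a constant map, which has the effect of trimming the $\pi$-components down to identities and leaving only the $A$-valued data. The second pullback intersects this against the evaluation $\epsilon$ at the basepoint of $\chi_{\phi}(A,n)$, pulled back along the constant-map inclusion $\chi(\pi,1)\to\chi_{\phi}(A,n)$; this forces the evaluation to land in the section, i.e.\ kills the degree-$n$ $A$ appearing at $m=n$ and isolates precisely the $A$ sitting at $m=n-1$. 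Carrying this out degree by degree, one reads off from the tabulated values that $F^{\Crs}_{\chi(\pi,1)}(\chi(\ints,1),\chi_{\phi}(A,n))_m$ is $\pi$ at $m=1$, is $A$ at $m=n-1$, and is trivial otherwise, with the $\pi$-action on $A$ still given by $\phi$ — which is exactly $\chi_{\phi}(A,n-1)$. One should treat the small cases $n=1,2$ separately (as the tables themselves do), checking that the shift $n\mapsto n-1$ interacts correctly with $\chi_{\phi}(A,0)$ and $\chi_{\phi}(A,1)$, where the groupoid $\mathit{Gpd}(\pi,A)$ and the semidirect product $\pi\ltimes A$ replace the naive formula.

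The main obstacle I expect is not the bookkeeping of which group appears in which degree, but verifying that the two pullbacks assemble the differentials, the $C_1$-action, and the crossed-complex axioms into genuinely the maps of $\chi_{\phi}(A,n-1)$, rather than merely producing the correct objects level by level. In particular one must confirm that the degree shift produced by the internal-hom out of $\chi(\ints,1)=\Pi(S^1)$ lowers the dimension of the coefficient group by exactly one (this is the crossed-complex analogue of the loop-space degree shift), and that the restriction to constant maps in the first pullback is compatible with the action of $\pi$ so that the resulting action on $A$ is still $\phi$ and not a conjugate or twisted version. Once these compatibilities are checked, the levelwise identification upgrades to an isomorphism of crossed complexes, establishing the claim.
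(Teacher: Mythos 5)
Your proposal is correct and follows essentially the same route as the paper: compute the internal hom $\CRS(\chi(\ints,1),-)$ from the tabulated $m$-fold homotopies, form the two levelwise pullbacks, observe that the second pullback (evaluation at the basepoint against the constant section) kills the copy of $A$ in degree $n$ and retains the one in degree $n-1$, and treat $n=1,2$ separately where $\pi\ltimes A$ and $\mathit{Gpd}(\pi,A)$ appear. The only loose point is your description of the first pullback as ``trimming the $\pi$-components down to identities'': for $n\geq 3$ it trims only the object level to a point while retaining $\pi$ at level $1$ (which is needed, since $\chi_{\phi}(A,n-1)_1=\pi$), but this does not affect the conclusion.
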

\begin{proof}
To justify the statement of the proposition, we make explicit calculations; considering separately the cases $n\geq 3,$ $n=2$ and $n=1$.

\noindent \textbf{Case I, n$\geq$ 3}:
 We need to compute the map $\iota \colon  \chi(\pi,1) \rightarrow \CRS(\chi(\ints,1),\chi(\pi,1))$: this corresponds to the map which takes a point of $\chi(\ints,1)$ to the constant map based at a point in $\chi(\pi,1)$. It can be factored as $\chi(\pi,1)\stackrel{\cong}{\rightarrow} \CRS(*,\chi(\pi,1))\rightarrow \CRS(\chi(\ints,1), \chi(\pi,1))$. The crossed complex $*$ is the complex which is trivial at each level. The latter map is induced from the map $\chi(\ints,1)\rightarrow *$.

The map $\iota\colon \chi(\pi,1) \rightarrow \CRS(\chi(\ints,1),\chi(\pi,1))$ on the zero level takes $*$ to $\id \in \pi$, on the 1-level takes $\pi$ isomorphically to $\pi=\CRS_1(\chi(\ints,1),\chi(\pi,1))(*,*)$. So, when we form the pullback we have

$$\CRS_{\chi(\pi,1)}(\chi(\ints,1),\chi_{\phi}(A,n))_m= \left\{ \begin{array}{ll}
 \ast &\mbox{ if $m=0$} \\
 \pi &\mbox{ if $m=1$}\\
  A & \mbox{ if $m=n-1,n$}\\
\ast &\mbox{else}
           \end{array} \right.
$$

To form the second pullback we need to calculate the map
$$\epsilon\colon \CRS_{\chi(\pi,1)}(\chi(\ints,1),\chi_{\phi}(A,n))\rightarrow \chi_{\phi}(A,n),$$
given by evaluation at a base-point. The base-point in $\chi(\ints,1)$ is given by a map $*\rightarrow \chi(\ints,1)$ and the map $\epsilon$ is the composite
$$\CRS_{\chi(\pi,1)}(\chi(\ints,1),\chi_{\phi}(A,n))\rightarrow \CRS_{\chi(\pi,1)}(*,\chi_{\phi}(A,n))\cong \chi_{\phi}(A,n).$$
This map is an isomorphism at the level $1$ and $n$ and the trivial map at the other levels. The map $\chi(\pi,1)\rightarrow \chi_{\phi}(A,n)$ is an isomorphism at level $1$ and trivial at other levels. So when we form the pullback the level $n$ part goes away and we get,

$$F^{\Crs}_{\chi(\pi,1)}(\chi(\ints,1),\chi_{\phi}(A,n))_m= \left\{ \begin{array}{ll}
 \pi &\mbox{ if $m=1$} \\
 A &\mbox{ if $m=n-1$}\\
\ast &\mbox{if $m\neq 1,n-1$}
           \end{array} \right.
                              $$

Thus we get $F^{\Crs}_{\chi(\pi,1)}(\chi(\ints,1),\chi_{\phi}(A,n))\cong \chi_{\phi}(A,n-1)$ for $n\geq 3$.

\noindent \textbf{Case II, n=2:}
 The map $$\CRS(\chi(\ints,1),\chi_{\phi}(A,2))\rightarrow \CRS(\chi(\ints,1),\chi(\pi,1))$$ is the trivial map on level $i$ for $i>1$ and the projection $$\lbrace x\in \pi| xfx^{-1}=g\rbrace \times A \rightarrow \lbrace x\in \pi | xfx^{-1}=g\rbrace $$ for $i=1$. Forming the pullback we get,
$$\CRS_{\chi(\pi,1)}(\chi(\ints,1),\chi_{\phi}(A,2))_m= \left\{ \begin{array}{ll}
 \ast &\mbox{ if $m=0$} \\
 \pi\ltimes A &\mbox{ if $m=1$}\\
  A & \mbox{ if $m=2$}\\
\ast &\mbox{if $m>2$}
           \end{array} \right.
                              $$

The map
$$\epsilon\colon \CRS_{\chi(\pi,1)}(\chi(\ints,1),\chi_{\phi}(A,2))\rightarrow \chi_{\phi}(A,2)$$
given by evaluation at the basepoint can be calculated similarly as before as an isomorphism on level 2 and the projection $\pi\ltimes A \rightarrow \pi$ on level 1, and trivial on other levels. For the second pullback we get,
$$F^{\Crs}_{\chi(\pi,1)}(\chi(\ints,1),\chi_{\phi}(A,2))_1=\pi\ltimes A;~F^{\Crs}_{\chi(\pi,1)}(\chi(\ints,1),\chi_{\phi}(A,2))_i=* \,\mbox{for all}~ i\neq 1.$$
Thus we have proved that $F^{\Crs}_{\chi(\pi,1)}(\chi(\ints,1),\chi_{\phi}(A,2))\cong \chi_{\phi}(A,1)$.

\noindent \textbf{Case III, n=1}:
 We fix the notation $pr$ for the projection $\pi \ltimes A \rightarrow \pi$. The map of mapping spaces $\CRS(\chi(\ints,1),\chi_{\phi}(A,1))\rightarrow \CRS(\chi(\ints,1),\chi(\pi,1))$ is $\pr$ on level 0 and 1 and trivial in higher levels. Forming the pullback we get,
$$\CRS_{\chi(\pi,1)}(\chi(\ints,1),\chi_{\phi}(A,1))_0=A.$$
We use the notation $(R,S)$ for a groupoid with objects $R$ and morphisms $S$. In this notation the next level is the groupoid pullback:
$$(*,\pi) \rightarrow (\pi, \Hom(f,g)=\lbrace x\in \pi| xfx^{-1}=g\rbrace) \stackrel{\pr}{\leftarrow} (\pi\ltimes A,\Hom(f,g)= \lbrace x\in \pi\ltimes A| xfx^{-1}=g\rbrace).$$ 
The groupoid pullback has object set $A$. The set $\Hom(l_1,l_2)$ is just the pullback of
$$\Hom(*,*)\rightarrow \Hom(*,*)\leftarrow  \Hom((*,l_1),(*,l_2)).$$
In the last set we have those elements $x=(a,l)$ (this means $x^{-1}=(a^{-1},-a^{-1}(l))$) which satisfy $x(*,l_1)x^{-1} =(*,l_2)$, i.e. $(*, a^{-1}(l+l_1)-a^{-1}(l))= (*,l_2)$. This reduces to the equation $a^{-1}(l_1)=l_2$. This means we have
$$\CRS_{\chi(\pi,1)}(\chi(\ints,1),\chi_{\phi}(A,1))_1(l_1,l_2)=\lbrace a\in \pi | a^{-1}(l_1)=l_2\rbrace \times A$$
and
$$\CRS_{\chi(\pi,1)}(\chi(\ints,1),\chi_{\phi}(A,2))_i(l)=*  ~~\mbox{  for all}~ i \geq 2.$$

The next step is to calculate the map
$$\epsilon\colon \CRS_{\chi(\pi,1)}(\chi(\ints,1),\chi_{\phi}(A,1))\rightarrow \chi_{\phi}(A,1)$$
which is given by evaluation at the base-point and can be calculated similarly as before. At the $0$-level it maps $A$ to $*$. At the $1$-level on $\Hom(l_1,l_2)$ is the inclusion
$$\lbrace a\in \pi | a^{-1}(l_1)=l_2\rbrace \times A \rightarrow \pi \ltimes A.$$
At other levels this is the trivial map. So again the pullback becomes a pullback of groupoids (just considering the 0 and 1-levels)
$$(*,\pi)\rightarrow (*,\pi\ltimes A)\leftarrow (A,\Hom(l_1,l_2)= \lbrace a\in \pi | a^{-1}(l_1)=l_2\rbrace \times A).$$
The object set of this pullback is $A$. In the pullback groupoid the morphisms between $l_1$ and $l_2$ are
$$\lbrace(a,(b,l)| (a,0)=(b,l)\, ~\mbox{and}~ b^{-1}(l_1)=l_2\rbrace.$$
This forces $a=b$ and $l=0$. Then we get exactly the same morphisms as those of $\mathit{Gpd}(\pi,A)$. Therefore we obtain

$$F^{\Crs}_{\chi(\pi,1)}(\chi(\ints,1),\chi_{\phi}(A,1))_m= \left\{ \begin{array}{ll}
 A &\mbox{ if $m=0$} \\
 \mathit{Gpd}(\pi,A) &\mbox{ if $m=1$}\\
 \ast &\mbox{if $m>1$}
           \end{array} \right.
                              $$

This is precisely the description of $\chi_{\phi}(A,0)$. This completes the second part.
\end{proof}

\begin{prop}\label{pbcrs}
For a $\pi$-module $(A,\phi)$, $$\BB F^{\Crs}_{\chi(\pi,1)}(\chi(\ints,1),\chi_{\phi}(A,n)) \simeq F_{K(\pi,1)}(S^1_{K(\pi,1)}, \BB\chi_{\phi}(A,n)).$$
\end{prop}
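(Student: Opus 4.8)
The plan is to apply the classifying space functor $\BB=\realz{N^{\Delta}(-)}$ to the two pullback squares in $\Crs$ that define $F^{\Crs}_{\chi(\pi,1)}(\chi(\ints,1),\chi_{\phi}(A,n))$, and to match the resulting diagram of spaces with the two topological pullbacks of Remark \ref{fibreloop} that compute $\Omega_{K(\pi,1)}(\BB\chi_{\phi}(A,n))=F_{K(\pi,1)}(S^1_{K(\pi,1)},\BB\chi_{\phi}(A,n))$. The comparison rests on Theorem \ref{thadjoint} applied with $X=S^1$, for which $\Pi(S^1)=\chi(\ints,1)$ by our choice of model, giving natural weak homotopy equivalences $\eta\colon \BB\CRS(\chi(\ints,1),\CC)\to \Maps(S^1,\BB\CC)$.

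First I would record that $\BB$ preserves these pullbacks. The nerve $N^{\Delta}$ is right adjoint to $\Pi$, so it preserves all limits, and geometric realization preserves finite limits in the category of compactly generated weak Hausdorff spaces; hence $\BB$ carries the levelwise pullbacks in $\Crs$ to pullbacks of spaces. Moreover $\BB$ sends a fibration of crossed complexes to a Serre fibration: by \cite[Proposition 6.2]{rbrown3} a crossed-complex fibration induces a Kan fibration of nerves, whose realization is a Serre fibration (this is exactly the mechanism used in Proposition \ref{pexsp}). Consequently $\BB$ takes a pullback along a crossed-complex fibration to a homotopy pullback of spaces.

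Next I would identify the four corners and the structure maps. Using $\eta$ we get $\BB\CRS(\chi(\ints,1),\chi_{\phi}(A,n))\simeq \Maps(S^1,\BB\chi_{\phi}(A,n))$ and $\BB\CRS(\chi(\ints,1),\chi(\pi,1))\simeq \Maps(S^1,K(\pi,1))$, while the remaining corners are $\BB\chi(\pi,1)=K(\pi,1)$ and $\BB\chi_{\phi}(A,n)$. Naturality of $\eta$ in $\CC$ matches the map induced by $p\colon\chi_{\phi}(A,n)\to\chi(\pi,1)$ with $\Maps(S^1,\BB p)$. Naturality of $\eta$ in $X$ along the cellular maps $\chi(\ints,1)\to\ast$ and $\ast\to\chi(\ints,1)$ identifies the constant-map inclusion $\iota$ with $e\colon K(\pi,1)\to\Maps(S^1,K(\pi,1))$ and the basepoint evaluation $\epsilon$ with $ev_*$; finally $\BB$ of the crossed-complex section $\chi(\pi,1)\to\chi_{\phi}(A,n)$ is the ex-space section of Proposition \ref{pexsp}. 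Thus the two $\BB$-images of the defining squares become weakly equivalent, over $K(\pi,1)$, to the two squares of Remark \ref{fibreloop}. Since the relevant legs ($\BB$ of the mapping-complex fibration induced by $p$, and $\BB\epsilon$) are Serre fibrations on both sides, both strict pullbacks are homotopy pullbacks, and gluing the two squares yields the desired weak equivalence $\BB F^{\Crs}_{\chi(\pi,1)}(\chi(\ints,1),\chi_{\phi}(A,n))\simeq F_{K(\pi,1)}(S^1_{K(\pi,1)},\BB\chi_{\phi}(A,n))$ of ex-spaces over $K(\pi,1)$.

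The main obstacle I expect is the bookkeeping that turns the strict pullbacks into homotopy pullbacks compatibly on both sides: one must know that $\CRS(\chi(\ints,1),-)$ carries the fibration $p$ to a fibration of crossed complexes (the pushout--product property of the monoidal model structure on $\Crs$ from \cite{rbrown4}, using that $\chi(\ints,1)=\Pi(S^1)$ is cofibrant) and that the evaluation $\epsilon$ is likewise a fibration, since $\ast\hookrightarrow\chi(\ints,1)$ is a cofibration. Granting this, together with the naturality of $\eta$ needed to make every identification respect the maps to, and the sections from, $K(\pi,1)$, the equivalence follows by pasting the two homotopy-pullback squares.
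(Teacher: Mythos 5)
Your proposal follows essentially the same route as the paper's proof: identify $\BB\CRS(\chi(\ints,1),\CC)\simeq\Maps(S^1,\BB\CC)$ via Theorem \ref{thadjoint}, note that $\BB$ preserves the defining pullbacks (nerve is a right adjoint, realization preserves pullbacks), and match the two pairs of pullback squares. The paper's version is terser and does not spell out the homotopy-pullback/fibrancy bookkeeping you flag at the end; your added care on that point is a legitimate refinement of the same argument rather than a different one.
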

\begin{proof}
The map $S^1\rightarrow \BB\Pi(S^1)\simeq K(\ints,1)$ is a weak equivalence. Also we know that $\Maps(X,\BB(\CC))\simeq \BB(\CRS(\Pi(X),\CC))$ (cf. Theorem \ref{thadjoint}). Therefore, $\BB\CRS(\chi(\ints,1),\CC)\simeq \Maps(S^1,\BB\CC)$. Since the nerve functor is a right adjoint and geometric realization takes pullbacks to pullbacks, the functor $\BB$ takes pullbacks to pullbacks. The proof follows.
\end{proof}
Combining the above propositions, we have the following result.

\begin{thm}\label{thnoneq}
The spaces $\{J_\pi A(V)\}_{V\subset \R^\infty}$ form an $\Omega$-prespectrum over $K(\pi,1)=\BB\chi(\pi,1)$.
\end{thm}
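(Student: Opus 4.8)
The plan is to assemble the three preceding propositions into a verification of the two defining conditions of an $\Omega$-prespectrum from Definition \ref{dparasec}: level $qf$-fibrancy, and the requirement that the adjoint structure maps be $q$-equivalences. The honest structure maps will then be recovered from the adjoint ones via the $(\Sigma_{K(\pi,1)},\Omega_{K(\pi,1)})$-adjunction.

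First I would dispose of level $qf$-fibrancy. By Proposition \ref{pexsp} each $J_\pi A(\R^n)=\BB(\chi_{\phi}(A,n))$ is a $qf$-fibrant ex-space over $K(\pi,1)$, and the extension to an arbitrary $V\subset\UU$ by the formula $J_\pi A(V)=\Omega^{\R^n-V}_{K(\pi,1)}J_\pi A(\R^n)$ is again $qf$-fibrant, since fibrewise loop spaces are built as iterated pullbacks and $qf$-fibrations are stable under pullback. Hence it suffices to work over the cofinal indexing collection $\{\R^n\}$.

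Next I would construct the adjoint structure maps. Working first inside $\Crs$, Proposition \ref{pparcrs} (with $n$ replaced by $n+1$) gives an isomorphism of crossed complexes
$$\chi_{\phi}(A,n)\;\cong\; F^{\Crs}_{\chi(\pi,1)}(\chi(\ints,1),\chi_{\phi}(A,n+1)).$$
Applying the classifying space functor $\BB$ and then invoking Proposition \ref{pbcrs} yields
$$J_\pi A(\R^n)=\BB(\chi_{\phi}(A,n))\;\cong\;\BB F^{\Crs}_{\chi(\pi,1)}(\chi(\ints,1),\chi_{\phi}(A,n+1))\;\simeq\; F_{K(\pi,1)}(S^1_{K(\pi,1)},\BB\chi_{\phi}(A,n+1)).$$
By the definition of the fibrewise loop functor the right-hand term is exactly $\Omega_{K(\pi,1)}J_\pi A(\R^{n+1})$, so the displayed composite is the required adjoint structure map $\widetilde{\sigma}^{\R^n,\R^{n+1}}\colon J_\pi A(\R^n)\to\Omega_{K(\pi,1)}J_\pi A(\R^{n+1})$. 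As the first arrow is an isomorphism (the image under $\BB$ of an isomorphism of crossed complexes) and the second is a $q$-equivalence by Proposition \ref{pbcrs}, the composite is a $q$-equivalence, which is precisely the $\Omega$-prespectrum condition. The honest structure maps $\sigma^{\R^n,\R^{n+1}}\colon\Sigma_{K(\pi,1)}J_\pi A(\R^n)\to J_\pi A(\R^{n+1})$ are then the adjoints of $\widetilde{\sigma}^{\R^n,\R^{n+1}}$, and the general $\sigma^{V,W}$ are obtained by iterating and composing along $V\subset W$.

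The remaining routine point is that these structure maps satisfy the coherence conditions (i) and (ii) of Definition \ref{dparasec}; these follow from the naturality of the isomorphism in Proposition \ref{pparcrs} and of the weak equivalence in Proposition \ref{pbcrs}, together with the functoriality of $\BB$ and the standard composition behaviour of the $\Sigma_{K(\pi,1)}$–$\Omega_{K(\pi,1)}$ adjunction. The genuine content of the theorem sits entirely in the two preceding propositions: Proposition \ref{pparcrs} realizes the fibrewise loop construction as a dimension shift \emph{inside} $\Crs$, and Proposition \ref{pbcrs} transports this identification across $\BB$ to the topological fibrewise loop functor. I expect the main obstacle — already absorbed into Proposition \ref{pbcrs} — to be ensuring that $\BB$ genuinely carries the crossed-complex pullback defining $F^{\Crs}_{\chi(\pi,1)}$ to the topological pullback defining $\Omega_{K(\pi,1)}$; this rests on $N^\Delta$ being a right adjoint and geometric realization preserving the relevant pullbacks, which is exactly what Proposition \ref{pbcrs} supplies. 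Granting that, the theorem is a formal assembly.
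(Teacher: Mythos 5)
Your proposal is correct and follows essentially the same route as the paper: level $qf$-fibrancy from Proposition \ref{pexsp}, the adjoint structure maps as $q$-equivalences by combining Proposition \ref{pparcrs} with Proposition \ref{pbcrs}, and the extension from the cofinal collection $\{\R^n\}$ to all of $\UU$ by iterated fibrewise loops. You merely spell out in more detail the assembly that the paper's proof states in one paragraph (and you correctly identify Proposition \ref{pbcrs}, rather than Proposition \ref{pexsp}, as the second ingredient for the structure maps).
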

\begin{proof}
Combining Proposition \ref{pparcrs} and Proposition \ref{pexsp}, we see that there is a map $$\BB \chi_{\phi}(A,n)\rightarrow \Omega_{K(\pi,1)} \BB\chi_{\phi}(A,n+1)$$ over $K(\pi,1)=\BB\chi(\pi,1)$ which is a $q$-equivalence. Also it follows from Proposition \ref{pexsp}, that the spaces $\BB\chi_{\phi}(A,n)$ are a $qf$-fibrant ex-space over $K(\pi,1).$ We have seen how to extend these results from the cofinal collection $\{ \R^n\}$ to the collection of all inner product subspaces of $\R^\infty$. Therefore, $\{J_\pi A(V)\}_{V\subset \R^\infty}$ is an $\Omega$-prespectrum.
\end{proof}

We recall the definition of cohomology theory defined by a parametrized spectrum $J$ over a base space $B$.
\begin{defn}\cite[Definition 20.2.4]{par}
 Let $J,~E$ be spectra over $B$. For integers $n$, define the $n$-th \emph{$J$-cohomology groups} of $E$ as $$J^n(E):=\pi_{-n}(r_*F_B(E,J)),$$ where $r_*$ is the base change functor with respect to the map $B\to \ast$ (cf. \cite[Section 2.1, 11.4]{par}). 
 For an ex-space $X$ over $B$, taking $E=\Sigma_B^{\infty}X$ defines the $J$-cohomology groups of $X$ determined by the spectrum $J$.
\end{defn}
These cohomology groups can also be written as
$$ J^n(E) = [S_B^{-n}, F_B(E, J)]_B \cong [E,F_B(S_B^{-n}, J)]_B \cong [E,\Sigma_B^n J]_B.$$ Here $S_B^{-n}$ is the $n$-fold desuspension of the sphere spectrum (see \cite[Definition 11.3.5]{par}).

Recall from Remark \ref{susprem} that for a space $X$ over $K(\pi,1)$ the notation $X_{+K(\pi,1)}$ denotes the ex-space $X\sqcup K(\pi,1)$ over $K(\pi,1)$. Note that the category $\Top/K(\pi,1)$ has a terminal object $K(\pi,1)\stackrel{\id}{\rightarrow}K(\pi,1)$ and the based objects of this category are precisely the ex-spaces. It follows that the functor $(-)_{+K(\pi,1)}$ is  a left adjoint to the forgetful functor from the category of ex-spaces to the category of spaces over $K(\pi,1)$. Moreover
$$S_{K(\pi,1)}(X)\cong \Sigma_{K(\pi,1)}(X_{+K(\pi,1)})$$

We are now in a position to show that the cohomology defined by the parametrized spectrum $J_\pi(A)$ is the cohomology with local coefficients.
\begin{thm}
\label{thm:locrepspec}
Let  $(A,\phi)$ be a $\pi$-module and $\theta:X\to K(\pi,1)$ be a space over $K(\pi,1)$. The we have $$H^n(X;\theta^*\A) \cong J_{\pi}A^n(X_{+K(\pi,1)}),$$ where left hand side is the cohomology of $X$ with local coefficients $\theta^*\A$ induced from $(A,\phi)$ by the map $\theta.$
\end{thm}
\begin{proof}
We have,
\begin{align}
J_{\pi}A^n(X_{+K(\pi,1)}) & =  J_\pi A^n(\Sigma^\infty_{K(\pi,1)} X_{+K(\pi,1)})\nonumber\\
 &= [\Sigma^\infty_{K(\pi,1)} X_{+K(\pi,1)},\Sigma^n_{K(\pi,1)} J_\pi A]_{K(\pi,1)}\nonumber \\
 &= [X_{+K(\pi,1)}, \Omega^\infty_{K(\pi,1)} \Sigma^n_{K(\pi,1)} J_\pi A]_{K(\pi,1)} \nonumber \\
 &=  [X_{+K(\pi,1)}, \BB(\chi_{\phi}(A,n)]_{K(\pi,1)}.\nonumber
\end{align}
The latter is the group obtained by taking homotopy classes of the maps (in $\Top/K(\pi,1)$) from $X\xrightarrow{\theta} K(\pi,1)$ to $\BB(\chi_{\phi}(A,n))\xrightarrow{\BB(p)}\BB(\chi(\pi,1))$. If $X$ is a CW complex, via the homotopy adjunction between $\Pi$ and $\BB$ as given in Eq \ref{eqadjoncrs}, this is exactly $[\Pi(X),\chi_{\phi}(A,n)]_{\chi(\pi,1)}$ which was shown to be $H^n(X;\theta^*\A)$ in Corollary \ref{trloc}.
\end{proof}

\section{Representation as a parameterized spectrum in the equivariant case}\label{squipar}

The definition of a parametrized ($\Omega$-)$G$-prespectrum over base $B$ is entirely analogous to  Definition \ref{dparasec} where $B$ is a $G$-space, $\UU$ is a $G$-universe and each $E(V)$ is an ex-$G$-space \cite[Definition 11.2.16, Definition 12.3.6]{par}. In this section we represent Bredon cohomology with local coefficients with a parametrized $G$-prespectrum over the $G$-space $K_G(\upi,1)$ (notations as in section  \ref{seqtheo}).

Let $\EE$ denote the category with two objects $s,t$ and the morphisms generated by $i\colon s\rightarrow t$ and $p\colon t\rightarrow s$ such that $p\circ i$ is the identity. We call the diagram category $\Top^\EE$ the ``ex-category" of spaces, and by an ``ex-functor" we mean a functor $\OGop \rightarrow \Top^\EE$.
We have observed in Section \ref{sbre} that, for an $\OG$-group $\upi$ and an $\upi$-module $\MM=(M,\uphi)$, we can construct $\OG$-crossed complexes $\uc{n}$ over the $\OG$-crossed complex $\chi_G(\upi,1)$. By applying the classifying space functor for each $n$ we get an ex-functor $\EE^\MM_n\colon \OGop \rightarrow \Top^\EE$ for each $n\geq 0$, which associates $G/H$ to the ex-
space $\BB\chi_{\uphi(G/H)}(M(G/H),n)$ over $\BB\chi(\upi(G/H),1)$.

In the previous section we have observed that for a fixed $H\leq G$, the ex-spaces $\{\EE^\MM_n(G/H)\}_{n\geq 0}$ form a parametrized $\Omega$-prespectrum, denoted by $\EE^\MM(G/H)$, over the Eilenberg-Mac~Lane space $\BB\chi(\upi(G/H),1)$, which is a $K(\upi(G/H),1)$ (cf. Theorem \ref{thnoneq}). In this section we prove that these parametrized $\Omega$-prespectra for different subgroups $H$ are the fixed point spectra of a parametrized $G$-$\Omega$-prespectrum $J_{G}\MM$ over the Eilenberg-Mac~Lane $G$-space $K_G(\upi,1)=\Psi \BB\chi_G(\upi,1)$ indexed over a trivial $G$-universe (that is, a naive parametrized $G$-$\Omega$-prespectrum). Here $\Psi\colon \OG\mbox{-}\Top \rightarrow G\mbox{-}\Top$ is Elmendorf's functor (cf. \cite{elm}), right adjoint to the functor $\Phi$ (cf. Equation \ref{eqphi}). Notice that since we are considering cohomology theories from arbitrary coefficient systems, we will not have the required transfer maps to form a spectrum indexed over a complete $G$-universe.

For each $n$ we apply Elmendorf's functor $\Psi$ to $\EE^\MM_n$ to obtain an ex-$G$-space $L_{\upi}(M,n) \rightarrow K_G(\upi,1)$. We denote this ex-$G$-space by $J_G \MM_n.$ 

\begin{prop}
The ex-$G$-spaces $J_G \MM=\{J_G \MM_n\}$ form an $\Omega$-$G$-prespectrum over $K_G(\upi,1)$ indexed on a trivial $G$-universe.
\end{prop}
\begin{proof}
 We have to check that the prespectrum is level-$qf$-fibrant and that the structure maps $J_G \MM_n\rightarrow \Omega_{K_G(\upi,1)}J_G \MM_{n+1}$ are $q$-equivalences of ex-$G$ spaces over $K_G(\upi,1)$.

Since $q$-equivalences of ex-$G$ spaces is just a $G$-homotopy equivalence of the total $G$-spaces, to prove the latter fact it is enough to check that for every subgroup $H$, the induced map on homotopy groups $\pi^H_*J_G \MM_n \rightarrow \pi^H_*\Omega_{K_G(\upi,1)}J_G \MM_{n+1}$ is an equivalence. That is,
$$\pi_*J_G \MM^H_n \cong \pi_*\Omega_{K(\upi(G/H),1)}J_G \MM^H_{n+1}.$$
We use $J_G \MM_n^H\simeq \EE^\MM_n(G/H)$ from the definition of Elmendorf's construction, so the result follows from the fact that $\EE^\MM(G/H)$ is a parametrized $\Omega$- prespectrum.

To show that the prespectrum is level $qf$-fibrant, we need to check that $J_G\MM_n \rightarrow K_G(\upi,1) $ is a $qf$-fibrant ex-$G$-space. By definition \cite[Definition 7.2.7 and Remark 7.2.11]{par}  this is true if $\Maps_G(S,J_G(\MM)_n) \rightarrow \Maps_G(S,K_G(\upi,1))$ is $qf$-fibrant non equivariantly for every finite $G$ set $S$. Since as $G$-spaces these are disjoint union of orbits $G/H $ it suffices to check that $J_G(\MM)_n^H \rightarrow K_G(\upi,1)^H$ is a Serre fibration and so a $qf$-fibration. But we have already proved this in Proposition \ref{padjoint}. Thus $J_G(\MM)$ is a parametrized $G$-$\Omega$-prespectrum over $K_G(\upi,1)$.
\end{proof}

\begin{defn}\cite[Definition 21.2.2]{par}
 For naive $G$-spectra $J$ and $E$ over $B$, define the \emph{$J$-cohomology groups} of $E$ by $J_G^n(E):=\pi_{-n}^G(r_*F_{B}(E, J))$
\end{defn}
This can be rewritten as
$$J_G^n(E)=[S_{B}^{-n}, F_{B}(E, J)]_{G,B}\cong [E,F_{B}(S_{B}^{-n}, J)]_{G,B} \cong [E,\Sigma_{B}^n J]_{G,B}.$$

We now show that Bredon cohomology with local coefficients can be described as the cohomology theory defined by the naive $G$-spectrum $J_G\MM$. As in the non-equivariant case, we fix an $\upi$-module $\MM=(M,\uphi)$. Let $X$ be a $G$-CW complex over $K_G(\upi,1)$, given by a map $\theta\colon X\to K_G(\upi,1)$. This gives an equivariant local coefficient system $\theta^*\MM$ on $X$. We make $X$ into an ex-$G$-space by adding a disjoint basepoint, $X_{+K_G(\upi,1)}=X\coprod K_G(\upi,1)$.
\begin{thm}
\label{thm:tgspec}
With notation as above, $$H^n_G(X;\theta^*\MM) \cong J_G \MM^n(X_{+K_G(\upi,1)}).$$
\end{thm}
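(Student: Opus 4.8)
The plan is to mirror the proof of Theorem \ref{thm:locrepspec}, replacing each non-equivariant ingredient by the equivariant counterpart constructed in this section. First I would unwind the definition of the $J_G\MM$-cohomology of the ex-$G$-space $X_{+K_G(\upi,1)}$. Writing $B=K_G(\upi,1)$ and taking $E=\Sigma^\infty_{K_G(\upi,1)} X_{+K_G(\upi,1)}$, the displayed reformulation of $J_G^n$ gives
$$J_G\MM^n(X_{+K_G(\upi,1)}) = [\Sigma^\infty_{K_G(\upi,1)} X_{+K_G(\upi,1)},\Sigma^n_{K_G(\upi,1)} J_G\MM]_{G,K_G(\upi,1)}.$$
The suspension-spectrum/infinite-loop adjunction in the category of parametrized $G$-spectra then rewrites this as
$$[X_{+K_G(\upi,1)}, \Omega^\infty_{K_G(\upi,1)}\Sigma^n_{K_G(\upi,1)} J_G\MM]_{G,K_G(\upi,1)},$$
where the bracket is now taken in the homotopy category of ex-$G$-spaces over $K_G(\upi,1)$.

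The second step is to identify the infinite-loop ex-$G$-space. Since $J_G\MM$ was shown above to be a level $qf$-fibrant $G$-$\Omega$-prespectrum whose $n$-th ex-$G$-space is $L_{\upi}(M,n)=\Psi\BB\uc{n}=\BB^G\uc{n}$, the $\Omega$-prespectrum condition implies that $\Omega^\infty_{K_G(\upi,1)}\Sigma^n_{K_G(\upi,1)} J_G\MM$ is $G$-equivalent over $K_G(\upi,1)$ to the ex-$G$-space $L_{\upi}(M,n)$. Hence
$$J_G\MM^n(X_{+K_G(\upi,1)}) \cong [X_{+K_G(\upi,1)}, L_{\upi}(M,n)]_{G,K_G(\upi,1)}.$$
Because the functor $(-)_{+K_G(\upi,1)}$ of adding a disjoint section is left adjoint to the forgetful functor from ex-$G$-spaces to $G$-spaces over $K_G(\upi,1)$, this set of sectioned homotopy classes coincides with the set $[X,L_{\upi}(M,n)]_{K_G(\upi,1)}$ of homotopy classes of maps in $G\mbox{-}\CW/K_G(\upi,1)$ from $(X,\theta)$ to $(L_{\upi}(M,n),p)$.

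Finally I would invoke Theorem \ref{thmol} for $n\geq 1$ and Corollary \ref{thmol0} for $n=0$, which together yield $[X,L_{\upi}(M,n)]_{K_G(\upi,1)}\cong H^n_G(X;\theta^*\MM)$, completing the chain of isomorphisms. The main obstacle I anticipate is the second step: one must be certain that passing to $\Omega^\infty_{K_G(\upi,1)}\Sigma^n_{K_G(\upi,1)} J_G\MM$ recovers $L_{\upi}(M,n)$ up to $G$-equivalence over $K_G(\upi,1)$. This relies on the level $qf$-fibrancy established earlier (so that the derived fibrewise loop and mapping constructions compute the correct homotopy types $H$-fixed-pointwise, via $J_G\MM_n^H\simeq\EE^\MM_n(G/H)$) together with the fact that the structure maps of $J_G\MM$ are $q$-equivalences; the $G$-equivariance is handled levelwise through Elmendorf's functor $\Psi$, exactly as in the verification that $J_G\MM$ is an $\Omega$-prespectrum. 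The remaining steps are formal adjunctions and direct appeals to the already-proven equivariant representability results.
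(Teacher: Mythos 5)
Your proposal follows essentially the same chain of isomorphisms as the paper's own proof: unwind the definition of $J_G\MM^n$, apply the $\Sigma^\infty/\Omega^\infty$ adjunction over $K_G(\upi,1)$, identify $\Omega^\infty_{K_G(\upi,1)}\Sigma^n_{K_G(\upi,1)}J_G\MM$ with $L_{\upi}(M,n)=\Psi\EE^\MM_n$ via the $\Omega$-prespectrum structure, and conclude by the equivariant representability result. Your explicit appeal to Corollary \ref{thmol0} for the case $n=0$ is a small refinement of the paper's bare citation of Theorem \ref{thmol}, but the argument is the same.
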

\begin{proof}
By definition of the cohomology theory defined by the parametrized $G$-spectrum $J_G\MM$, we have
\begin{align}
J_G \MM^n(\Sigma^\infty_{K_G(\upi,1)} X_{+K_G(\upi,1)})
                              &= [\Sigma^\infty_{K_G(\upi,1)} X_{+K_G(\upi,1)},\Sigma_{K_G(\upi,1)}^n J_G \MM]_{K_G(\upi,1)}\nonumber \\
                              &= [X_{+K_G(\upi,1)}, \Omega^\infty_{K_G(\upi,1)} \Sigma_{K_G(\upi,1)}^n J_G \MM]_{K_G(\upi,1)} \nonumber \\
                              &= [X_{+K_G(\upi,1)}, J_G \MM_n]_{K_G(\upi,1)} \nonumber \\
                              &= [X_{+K_G(\upi,1)},\Psi\EE^{\MM}_n]_{K_G(\upi,1)} \nonumber \\
                              &= [X_{+K_G(\upi,1)},L_{\upi}(M,n)]_{K_G(\upi,1)} \nonumber
\end{align}
\noindent From Theorem \ref{thmol}, we know that the last expression is Bredon cohomology with local coefficients.
\end{proof}

\providecommand{\bysame}{\leavevmode\hbox to3em{\hrulefill}\thinspace}
\providecommand{\MR}{\relax\ifhmode\unskip\space\fi MR }
\providecommand{\MRhref}[2]{%
  \href{http://www.ams.org/mathscinet-getitem?mr=#1}{#2}
}
\providecommand{\href}[2]{#2}

\end{document}